\newtheorem{theorem}{Theorem} 
\newtheorem{lemma}[theorem]{Lemma}
\newtheorem{prop}[theorem]{Proposition} 
\newtheorem{remark}[theorem]{Remark} 
\newtheorem{corollary}[theorem]{Corollary}
\newtheorem{defi}[theorem]{Definition}
\numberwithin{equation}{section}
\newcommand{\R}{{\mathbb R}} 
\newcommand{\C}{{\mathbb C}} 
\newcommand{\N}{{\mathbb N}}
\newcommand{\dd}{{\rm d}}
\begin{document}

\title{High order linearly implicit methods for evolution equations}

\author{Guillaume Dujardin \and Ingrid Lacroix-Violet}
\address[I. Lacroix-Violet]{Universit\'e de Lorraine, CNRS, IECL, F-54000 Nancy, France}
\address[G. Dujardin]{Univ. Lille, Inria, CNRS, UMR 8524 - Laboratoire Paul Painlev\'e F-59000 Lille}

\date{Received: date / Accepted: date}

\maketitle


\begin{abstract}
  This paper introduces a new class of numerical methods for the time integration of evolution
  equations set as Cauchy problems of ODEs or PDEs.
  The systematic design of these methods mixes the Runge--Kutta collocation formalism
  with collocation techniques,
  in such a way that the methods are linearly implicit and have high order.
  The fact that these methods are implicit allows to avoid CFL conditions when the
  large systems to integrate come from the space discretization of evolution PDEs. Moreover,
  these methods are expected to be efficient since they only require to solve one linear system of equations
  at each time step,
  and efficient techniques from the literature can be used to do so.
  After the introduction of the methods, we set suitable definitions of consistency and stability
  for these methods. This allows for a proof that arbitrarily high order linearly implicit
  methods exist and converge when applied to ODEs.
  Eventually, we perform numerical experiments on ODEs and PDEs that illustrate our theoretical
  results for ODEs, and compare our methods with standard methods for several evolution PDEs.
\end{abstract}


\maketitle
{\small\noindent 
{\bf AMS Classification.} 65M12, 65M70, 65L20, 65L06, 81Q05, 35Q41, 35K05

\bigskip\noindent{\bf Keywords.} Cauchy problems, evolution equations, time integration,
numerical methods, high order, linearly implicit methods.}

\section{Introduction}
The goal of this paper is to introduce a new class of methods for the time integration
of evolution problems, set as (systems of) deterministic ODEs or PDEs.
This class consists in methods of arbitrarily high order that require only the solution
of one linear problem at each time step: no nonlinear system is to be solved.
As is usual in the litterature, we call these methods linearly implicit.
They rely on the combination of a classical collocation Runge--Kutta method with a specific
treatment for the nonlinearity.

In particular, we show that, using the methods developed in this paper,
one can solve numerically virtually any ODE, up to any order,
by solving only linear systems at each time step.
Moreover, we believe that this new class of methods can help dramatically reducing the computational
time in several cases of time integration of evolution PDEs.
Indeed, after space discretization of an evolution PDE, if one uses, say,
an implicit (for stability reasons) Runge--Kutta method, then one needs to solve a nonlinear
problem in high dimension at each time step
(and one may use a fixed-point method or a Newton method to do so,
for example). With the methods introduced in this paper, the integration over any time step
can be carried out using only the solutions of linear systems in high dimension, and one can rely on
very efficient techniques, either
direct ($LU$ factorization, Choleski factorization, {\it etc}) or iterative (Jacobi method,
Gauss--Seidel method, conjugate gradient, Krylov subspace method, {\it etc} \cite{Saad03}),
depending on the structure of the problem at hand, to do so.

Of course, high order one step methods
in time exist in the literature since the pioneer work of Runge \cite{Runge1895}
and Kutta \cite{Kutta1901}.
The interested reader may refer to \cite{HNW93} for the integration of nonstiff problems and
\cite{HW96} for the integration of stiff problems, and to \cite{BW96} for historical notes
and references. Some methods are explicit, and lead,
for PDE problems, to restrictive CFL conditions in general. Some methods are fully implicit
and require the solution of nonlinear systems that are high dimensional in the PDE approximation
context. Other high order methods have been developed for PDEs. For example, high order exponential integrators have been used for parabolic problems \cite{HO05,HO10} and for NLS equations \cite{DUJ09,Nous2017}. Beyond the analysis presented in this paper in an ODE context, one of the goals
of this paper is to convince the reader that, in a PDE context, linearly implicit methods such
as that developed below can outperform classical methods from the literature with the same order.
This means that they require less CPU time to compute an approximation of the solution with
a given (small) error.

The methods introduced in this paper are {\it not} linear multistep methods (see \cite{Dahlquist63}
or Chapter III of \cite{HNW93}).
Indeed, for a general vector field, linear multistep methods are either explicit or fully
implicit, while the methods introduced in this paper are only linearly implicit.
Let us mention, however, that linearly implicit linear multistep methods have been developed
  and analysed, for example in \cite{Akrivis04} for nonlinear parabolic equations (see also
  \cite{Akrivis14}).
In this paper, we shall introduce suitable concepts of consistency, stability and convergence
for our methods, and we sometimes borrow the vocabulary to that of linear multistep methods,
but the definitions are indeed different. 
Note that the methods introduced in this paper are {\it not} one-step methods either.
Therefore, one cannot use composition techniques (see \cite{S90,YOSHIDA1990262,McLachlan2002})
directly to build up high order methods from lower order ones:
deriving a linearly implicit high order method (that is
convergent in a reasonable sense) is a challenge {\it per se}, that we tackle in this paper.
Another important class of high order methods, introduced by J. Butcher,
not to be confused with the one introduced in this paper, is that of DIMSIMs
(Diagonally Implicit MultiStep Integration Methods)
\cite{ButcherDIMSIM}, where the word "implicit" does not refer at all to "linearly implicit" but
rather to "fully implicit" (meaning : nonlinearly, even if diagonally). These methods
have been later generalised in a class called General Linear Methods (GLM) \cite{ButcherGLM},
that does not contain the methods introduced in this paper either.

The methods introduced in this paper are {\it not} classical linearly implicit
  methods either. Indeed, such methods have a long history, which
  dates back at least to the work of Rosenbrock \cite{Rosenbrock1963}.
  They have been developped and analysed on several evolution equations in several contexts
  by numerous authors. For example, the work of Rosenbrock was revisited in \cite{KW81} where
  Rosenbrock-Wanner (ROW) methods are introduced. The concept of $B$-convergence has been developed
  in \cite{Frank81}, analysed
  in \cite{Strehmel87,Strehmel88} for linearly implicit one step methods (see also \cite{Deuflhard83}).
  These methods have been applied to multibody
  systems in \cite{Wensch96}, to nonlinear parabolic equations in \cite{Lubich95},
  to advection-reaction-diffusion equations in \cite{Calvo01}
  and more recently to surface evolution in \cite{Kovacs18}.
  Indeed, such methods always involve derivatives of the vector field (or part of the vector field),
  or (sometimes crude) approximations to this derivative. In contrast, the methods developed
  in this paper do not : They rely on an accurate interpolation procedure in time
  for the nonlinear terms.

The methods introduced in this paper use additional variables to take care of the nonlinear terms
with high order accuracy, while making it possible to solve only linear systems at each time step.
Numerical methods using additional variables are not new. Indeed, such methods have been developped
in different contexts in order to achieve qualitative properties of the methods.
For example, the relaxation method introduced by C. Besse \cite{Besse2004} for the nonlinear Schr\"odinger (NLS) equation 
is a second order scheme \cite{BDDLV19} which preserves a discrete energy.
That relaxation method \cite{Besse2004} uses one additional variable to approximate part of
the nonlinearity in the NLS equation and is linearly implicit. In this sense,
the methods developped in this paper may be seen as generalizations of this
relaxation method. However, our goal is now to develop high order methods. To do so, we use
a higher order collocation approximation of part of the nonlinear terms in the equation.
Another class of numerical methods using additional variables is that of scalar auxiliary variable
methods (SAV) \cite{ShenXu18} and multiple scalar auxiliary variable (MSAV) \cite{ChengShen18}.
This class was introduced to produce unconditionnaly stable schemes for dissipative problems
with gradient flow structure. The auxiliary variable in this context is used to ensure
discrete energy decay. The order of the methods (1 or 2 in the references
above) is not the main issue.

Let us mention two additional goals that the authors aim at tackling with the methods introduced
in this paper.
First, the authors would like to be able to develop a stability
(and convergence) analysis for stiff problems,
{\it i.e. } an analysis with constants that depend only on the class of the linear
part of the vector field (later referred to as $L$, see \eqref{eq:PDE}) and not on
that linear part itself.
This would allow for the numerical treatment of evolution PDE problems, as well as their
space discretizations. This will be achieved in a forthcoming work, even if numerical
examples of PDE problems are presented in Section \ref{sec:num}.
Second, the authors would like to build up high order linearly implicit methods
with suitable qualitative properties ({\it e.g.} energy decay for dissipative problems,
or energy preservation for hamiltonian problems). For example, the relaxation method
\cite{Besse2004}, which belongs to the class of linearly implicit methods described in this paper,
preserves an energy when applied to the nonlinear Schr\"odinger equation.
For this reason, this paper only deals with constant time step methods.

The outline of this paper is as follows.
In Section \ref{sec:methstab}, we introduce the methods for a general
semilinear evolution problem (ODE or PDE) and
we introduce specific notions of stability, consistency and convergence for our class of methods.
Morover, we show, in a constructive way, that stable methods of arbitrarily high order exist in
Theorem \ref{th:specD} and Corollary \ref{cor:existence}. The main theoretical result of this paper
is that one can build up arbitrarily high order convergent linearly implicit methods
for ODEs (Theorem \ref{th:conv}).
We conclude Section \ref{sec:methstab} with examples of methods of order 1, 2, 4 and 6.
In Section \ref{sec:num}, we provide numerical examples of solutions of ODEs and PDEs.
These numerical experiments illustrate the convergence result of Theorem \ref{th:conv}
for evolution ODEs.
Moreover, they indicate that the result of Theorem \ref{th:conv} is still valid in several PDE
contexts.
We consider for example a NLS equation in 1d and 2d and
nonlinear heat equation in 1d.
The main result of the numerical experiments of Section \ref{sec:num} is that, for ODEs,
the linearly implicit methods do not dramatically outperform classical methods
from the literature with the same
order, no matter whether they are implicit or explicit (see Section \ref{subsec:scalODE}).
However, for the approximation of evolution PDEs in 1d (see Section \ref{subsubsec:Schro1d}),
with moderate space discretization, the linearly implicit methods show performances comparable to that
of explicit methods. Moreover, for the approximation of evolution PDEs in 2d
(see Section \ref{subsubsec:Schro2d}) with precise space discretization (leading to high number
of unknowns), the linearly implicit methods developed in this paper manage to outperform
standard methods from the literature with the same order.

\section{Linearly implicit methods of arbitrarily high order}
\label{sec:methstab}

\subsection{Introduction of the methods}
\label{subsec:IntroMeth}

We consider a semilinear autonomous evolution equation of the form
\begin{equation}
  \label{eq:PDE}
  \partial_t u = L u + N(u)u,
\end{equation}
where $L$ is a linear differential operator and $N$ is a nonlinear
function of $u$.
One can think for examples of the NLS equation, the nonlinear heat equation, or a simple ODE (see Section \ref{sec:num}
for actual examples).
We start at time $t=0$ with an initial datum $u^0$ (with $0$ in superscript)
in some functional space so that the Cauchy problem is well-posed on some interval $[0,T^\star)$
with $T^\star>0$. We choose $h>0$ and set $t_n=nh$ for $n\in\N$ as
long as $t_n<T^\star$.

Let us now start with the presentation of the new class of methods.
Assume a collocation Runge--Kutta method with $s\geq 1$ stages is given with coefficients
$0\leq c_1<\dots<c_s\leq 1$, $(a_{i,j})_{1\leq i,j\leq s}$ and $(b_i)_{1\leq i\leq s}$.
We denote by $c$ the vector $(c_i)_{1\leq i\leq s}$ and by $\mathds{1}$ the vector of size $s$ with all entries equal to one.  

We denote by $u$ the exact solution of \eqref{eq:PDE} and we set $\gamma(t)=N(u(t,\cdot))$.
We assume we are given $s$ approximations
\begin{equation*}
  \gamma_{n-1+c_i}\sim \gamma(t_{n-1}+c_i h) \qquad 1\leq i \leq s,
\end{equation*}
and another approximation $u_n\sim u(t_n,\cdot)$.
For possible ways of computing the $s$ first approximations, we refer to Remark \ref{rem:initgamma}.
For the numerical initial datum $u_0$ (with subscript $0$),
we consider an approximation of the exact initial datum $u^0$ (with superscript $0$) of equation
\eqref{eq:PDE} or its exact value.
For $(\theta_1,\dots,\theta_s)\in\R^s$ and $D\in{\mathcal M}_s(\R)$
to be chosen later, we define explicitly $(\gamma_{n+c_1},\dots,\gamma_{n+c_s})$
with the relation
\begin{equation}
\label{eq:heritagegamma}
  \left[
    \begin{matrix}
      \gamma_{n+c_1}\\
      \vdots\\
      \gamma_{n+c_s}
    \end{matrix}
  \right]
=
D
 \left[
    \begin{matrix}
      \gamma_{n-1+c_1}\\
      \vdots\\
      \gamma_{n-1+c_s}
    \end{matrix}
  \right]
+
 \left[
    \begin{matrix}
      \theta_{1}\\
      \vdots\\
      \theta_{s}
    \end{matrix}
  \right]
  N(u_n).
\end{equation}
Then, we define, linearly implicitly $(u_{n,1},\dots,u_{n,s})$ as the solution
of the Runge--Kutta like system
\begin{equation}
  \label{eq:RKimpl}
  u_{n,i}=u_n+h \sum_{j=1}^s a_{i,j}(L+\gamma_{n+c_j})u_{n,j}\qquad 1\leq i\leq s.
\end{equation}
Last, we set explicitly
\begin{equation}
  \label{eq:RKexpl}
  u_{n+1} = u_n+h \sum_{i=1}^s b_i(L+\gamma_{n+c_i})u_{n,i}.
\end{equation}

\noindent
The steps \eqref{eq:heritagegamma},\eqref{eq:RKimpl} and \eqref{eq:RKexpl} define
a linearly implicit method
\begin{equation*}
  (u_{n+1},\gamma_{n+c_1},\dots,\gamma_{n+c_s})=\Phi_{h}(u_{n},\gamma_{n-1+c_1},\dots,\gamma_{n-1+c_s}).
\end{equation*}

\begin{remark}
  \label{rem:relaxBesse}
  The relaxation method introduced for the NLS equation
  \begin{equation*}
    i\partial_t u + \Delta u = \lambda |u|^{2} u,
  \end{equation*}
  in \cite{Besse2004} writes
  \begin{equation}
    \label{eq:keepcoolBesse}
    \left\{
      \begin{array}{rcl}
    \dfrac{\phi_{n+1/2}+\phi_{n-1/2}}{2} & = & |u_n|^{2},\\
    i\dfrac{u_{n+1}-u_n}{h} & = & \left(-\Delta +\lambda \phi_{n+1/2}\right) \dfrac{u_n+u_{n+1}}{2}
    \end{array}
  \right.
  .
  \end{equation}
  This corresponds to taking $L=i\Delta$ and $N(u)=-i \lambda |u|^{2}$
  in \eqref{eq:PDE} and $s=1$, $a_{1,1}=\frac12$, $b_1=1$, $c_1=\frac12$, $\gamma_{n+1/2}=-i\lambda\phi_{n+1/2}$,
  $D=[-1]$ and $\theta_1=2$ in the numerical method
  \eqref{eq:heritagegamma},\eqref{eq:RKimpl},\eqref{eq:RKexpl}.

  In order to achieve order 2 with the relaxation method, C. Besse introduced a single auxiliary
  unknown $\phi$ on a staggered grid, corresponding to the relation $\phi=|u|^2$.
  Since we want to achieve higher orders, we decide to introduce several
  auxiliary unknowns on a staggered grid with $s$ points, corresponding to the
  relation $\gamma=N(u)$.

  Note that the convergence of order 2 of the relaxation method for the NLS equation
  is a difficult result and is {\it not} a consequence of the results of this paper, which
  only deals with ODEs in the theoretical part (Section 2) and allows for PDEs for illustration
  purposes (Section 3). Indeed, proving the convergence of a numerical time integration method
  applied to a PDE requires a functional analysis framework adapted to the PDE at hand and cannot
  in general
  be done once and for all. For the relaxation method applied to the NLS equation, the
  convergence of order 2 is proved in \cite{BDDLV19}.
  
\end{remark}

\subsection{Consistency and stability of the step \eqref{eq:heritagegamma}}

Let us denote by $\rho(D)$ the spectral radius of the matrix $D$, {\it i.e.} the biggest
modulus of its complex eigenvalues.
In view of relation \eqref{eq:heritagegamma}, we decide to set the following definitions
for the stability and consistency of the step \eqref{eq:heritagegamma}.
\begin{defi} \label{def:stab}
  The step \eqref{eq:heritagegamma} is said to be {\it stable} if
  \begin{equation*}
    \sup_{n\in\N} \|D^n\| < +\infty,
  \end{equation*}
  for some norm on ${\mathcal M}_s(\R)$.
  The step \eqref{eq:heritagegamma} is said to be {\it strongly stable} if $\rho(D)<1$.
\end{defi}
\begin{remark}
  In the definition of the stability above,
  the boundedness of the sequence  $(D^n)_{n \geq 0}$ is independant of the norm
  chosen on ${\mathcal M}_s(\R)$.
  Moreover, it is equivalent to the fact that $\rho(D)<1$ or $\rho(D)=1$ with simple Jordan
  blocks for $D$ for all eigenvalues of modulus 1.
  In particular, if the step \eqref{eq:heritagegamma} is strongly stable, then it is stable.
  The converse is not true in general.
  For example, the classical relaxation method of Remark \ref{rem:relaxBesse} is stable but
not strongly stable.
\end{remark}

In order to define the consistency of step \eqref{eq:heritagegamma},
we introduce the $s \times s$ square matrices $V_c, V_{c-\mathds{1}}$ and $\Theta$ defined by 
\begin{itemize}
	\item for all $i\geq 1$  and  $j \geq 1$,   $ \quad(V_c^h)_{ij}=(c_ih)^{j-1}$, 
	\item for all $i\geq 1$  and  $j \geq 1$, $\quad(V_{c-\mathds{1}}^h)_{ij}=((c_i-1)h)^{j-1}$,
	\item for all $i \geq 1$  and  $j \geq 2$,  $\quad (\Theta)_{i1}=\theta_i, ~ (\Theta)_{ij}=0$. 
\end{itemize}

\begin{defi} \label{def:cons}
We say that the step \eqref{eq:heritagegamma} is consistent of order $s$ if for all $h>0$,
\begin{equation}
\label{eq:rel_Vc_D}
V_c^h=D V_{c-\mathds{1}}^h+\Theta.
\end{equation}
\end{defi}
This relation holds for all $h>0$ if and only if it holds for $h=1$, as we show below.
Indeed, introducing the diagonal matrix $G(h)$ with coefficients $1, h, h^2, \dots, h^{s-1}$, one has 
$V_c^h=V_c^1G(h)$ and $V_{c-\mathds{1}}^h=V_{c-\mathds{1}}^1G(h)$. Since the $c_i$ are distinct,
the Vandermonde matrices $V_c^h$ and $V_{c-\mathds{1}}^h$ are invertible. By the relation \eqref{eq:rel_Vc_D} and using the matrix $G(h)$, we have
\begin{equation}
  \label{eq:relD}
  D=(V_c^h-\Theta)(V_{c-\mathds{1}}^h)^{-1}=(V_c^1G(h)-\Theta)G(h)^{-1}(V_{c-\mathds{1}}^1)^{-1}=V_c^1(V_{c-\mathds{1}}^1)^{-1}-\Theta G(h)^{-1}(V_{c-\mathds{1}}^1)^{-1}.
\end{equation}
Since $\Theta$ is zero except maybe on its first column, we have $\Theta (G(h))^{-1}=\Theta$.

The definition of the step \eqref{eq:heritagegamma} of the method
\eqref{eq:heritagegamma}-\eqref{eq:RKexpl} depends on the $s^2$ coefficients of the matrix
$D$ and the $s$ coefficients $\theta_1,\dots,\theta_s$.
Requiring that the step \eqref{eq:heritagegamma} is of order $s$ provides us with $s^2$ linear
equations between these unknowns (see relation \eqref{eq:relD}).
In Theorem \ref{th:specD}, we prove that we can add $s$ equations involving these unknows
by imposing the spectrum of the matrix $D$ and that the system that
we obtain has indeed a unique solution.
This will allow in particular to prove the existence of stable and strongly stable steps
\eqref{eq:heritagegamma} with order $s$ (see Corollary \ref{cor:existence}).

\begin{theorem}
  \label{th:specD}
  Assume $c_1,\dots,c_s$ are fixed and distinct as above.
  For all disctinct $\lambda_1,\dots,\lambda_s\in \C\setminus\{1\}$, there exists
  a unique $((\theta_1,\dots,\theta_s),D)\in \C^s\times{\mathcal M_s(\C)}$
  such that the step \eqref{eq:heritagegamma} is of order $s$ and the spectrum of the matrix $D$
  is exactly $\{\lambda_1,\dots,\lambda_s\}$.
  If moreover the set $\{\lambda_1,\dots,\lambda_s\}$ is stable under
  complex conjugation, then $(\theta_i)_{1\leq i\leq s}\in\R^s$
    and $D\in{\mathcal M}_s(\R)$.
\end{theorem}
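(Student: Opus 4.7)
The plan is to exploit the rank-one structure of the unknown perturbation. From the discussion preceding the theorem, requiring the step to be of order $s$ is equivalent to the identity $D V_{c-\mathds{1}}^1 = V_c^1 - \Theta$. Writing $\Theta = \theta e_1^\top$ with $\theta = (\theta_1,\dots,\theta_s)^\top$ yields
\[
D = A - \theta b^\top, \qquad A := V_c^1 (V_{c-\mathds{1}}^1)^{-1}, \qquad b^\top := e_1^\top (V_{c-\mathds{1}}^1)^{-1},
\]
where $A$ and $b$ are independent of $\theta$. The theorem thus reduces to the following pole-placement problem: show that $\theta \mapsto \mathrm{spec}(A - \theta b^\top)$ is a bijection from $\C^s$ onto the unordered $s$-tuples of distinct elements of $\C \setminus \{1\}$.

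My next step is to identify $A$ with a shift operator. Writing $V_{c-\mathds{1}}^1 = V_c^1 S$ with $S$ upper triangular and $S_{mj} = \binom{j-1}{m-1}(-1)^{j-m}$ (the expansion of $(x-1)^{j-1}$ in the monomial basis), one finds $A = M S^{-1} M^{-1}$ with $M := V_c^1$; here $S^{-1}$ represents, in the monomial basis of polynomials of degree at most $s-1$, the shift $P(x) \mapsto P(x+1)$, and has entries $(S^{-1})_{mj} = \binom{j-1}{m-1}$. In particular $\chi_A(z) = (z-1)^s$ and $(A-I)^s = 0$. A direct check also gives $b^\top M = e_1^\top S^{-1} = \mathds{1}^\top$, since the first row of $S^{-1}$ consists of the entries $\binom{j-1}{0} = 1$.

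I would then compute the characteristic polynomial via the rank-one matrix determinant lemma,
\[
\chi_D(z) = (z-1)^s + b^\top \mathrm{adj}(zI - A)\,\theta,
\]
and expand $\mathrm{adj}(zI - A) = \sum_{k=0}^{s-1}(z-1)^{s-1-k}(A-I)^k$ via the Neumann series, which terminates because $A-I$ is nilpotent. Since $\prod_{i=1}^s(z-\lambda_i)$ and $(z-1)^s$ share the same leading term $z^s$, their difference $p(z)$ has degree at most $s-1$ and admits a unique expansion $p(z) = \sum_{k=0}^{s-1} p_k (z-1)^{s-1-k}$. Matching coefficients reduces the problem to the linear system
\[
b^\top(A-I)^k \theta = p_k, \qquad k = 0, \dots, s-1,
\]
whose matrix is the observability matrix of the pair $(b^\top, A)$.

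The main obstacle is to show that this system has a unique solution for every right-hand side. I would handle this via a polynomial interpretation: setting $\eta := M^{-1}\theta$ and identifying $\eta$ with the coefficient vector of $E(x) := \sum_{i=1}^s \eta_i x^{i-1}$, the relation
\[
b^\top (A-I)^k \theta = \mathds{1}^\top (S^{-1}-I)^k \eta = (\Delta^k E)(1),
\]
where $\Delta$ is the forward difference operator $\Delta E(x) := E(x+1) - E(x)$, turns the system into a finite-difference interpolation problem. By Newton's forward-difference formula $E(n+1) = \sum_{k=0}^n \binom{n}{k}(\Delta^k E)(1)$, vanishing of all $(\Delta^k E)(1)$ forces $E$ to vanish at $1,2,\dots,s$, hence (since $\deg E \leq s-1$) identically, so $\eta = 0$. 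This yields unique solvability, producing a unique $\theta$ and hence a unique $D$, whose characteristic polynomial $\prod_i(z-\lambda_i)$ has the prescribed distinct roots as spectrum. Finally, if $\{\lambda_1,\dots,\lambda_s\}$ is stable under complex conjugation, then $p \in \R[z]$, so the $p_k$ are real; the linear system above has real coefficients, so $\eta \in \R^s$, and therefore $\theta \in \R^s$ and $D \in \mathcal M_s(\R)$.
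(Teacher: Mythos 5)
Your proof is correct, and it reaches the conclusion by a genuinely different route from the paper's. Both arguments start from the same rank-one structure $D=A-\theta b^\top$ extracted from \eqref{eq:relD}, and both exploit that $A$ is conjugate to the unipotent shift $P\mapsto P(X+1)$, so that $(A-I)^s=0$. From there the paper works one eigenvalue at a time: it solves the eigenvector equation for each $\lambda_k$ by back-substitution (formula \eqref{eq:nilp}) and arrives at the system \eqref{eq:calcY}, whose matrix $\bigl[P_i\bigl(1/(1-\lambda_j)\bigr)\bigr]$ depends on the target spectrum; invertibility follows because the $P_i$ have distinct degrees $1,\dots,s$ and no constant term while the points $1/(1-\lambda_j)$ are distinct and nonzero. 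You instead compute the whole characteristic polynomial at once via the matrix determinant lemma and the terminating Neumann series for $\mathrm{adj}(zI-A)$, which converts the spectral condition into the Krylov (observability) system $b^\top(A-I)^k\theta=p_k$; its matrix does not depend on the $\lambda_i$ at all, and you prove its invertibility by the finite-difference/Newton-interpolation argument. The two systems are of course equivalent (the paper's is obtained from yours by left-multiplying by a Vandermonde matrix in the $1/(1-\lambda_j)$), but your organization buys two things: the argument never divides by $1-\lambda_k$, so it actually establishes existence and uniqueness of $\theta$ for \emph{any} prescribed monic characteristic polynomial of degree $s$ — in particular the hypothesis $\lambda_i\neq 1$ is an artifact of the paper's eigenvector construction, not of the statement; and the reality claim becomes immediate, since your coefficient matrix is real independently of the $\lambda_i$ and only the right-hand side $(p_k)$ needs checking, whereas the paper must conjugate the whole system \eqref{eq:calcY} and invoke uniqueness. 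What the paper's route buys in exchange is an explicit formula for the eigenvectors of $M-Y$ and the concrete system \eqref{eq:calcY}, which is the one actually solved in Section \ref{subsec:examples} to construct the example methods. The only cosmetic slip in your write-up is the phrase asserting that $\theta\mapsto\mathrm{spec}(A-\theta b^\top)$ is a bijection onto the distinct $s$-tuples in $\C\setminus\{1\}$: the map's image also contains multisets with repetitions, and what you actually prove (and what is needed) is that $\theta\mapsto\chi_{A-\theta b^\top}$ is a bijection onto monic polynomials of degree $s$, which for distinct $\lambda_i$ is equivalent to prescribing the spectrum as a set.
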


\begin{proof}
  We set $M=(V_{c-\mathds{1}}^1)^{-1}V_c^1$.
  Note that $M$ is in fact independant of the choice of the $(c_i)_{1\leq i\leq s}$.
  Indeed, it is the matrix of the linear mapping $P(X)\mapsto P(X+1)$
  in the canonical basis of $\R_{s-1}[X]$. This means that the coefficients $(M_{ij})_{1\leq i,j\leq s}$
  of $M$ are given by $M_{ij}=0$ if $j<i$ and $M_{ij}={j-1\choose i-1}$ otherwise.
  In particular, it is upper triangular and its diagonal
  elements are equal to $1$.
  Assuming step \eqref{eq:heritagegamma} is consistent of order $s$, with \eqref{eq:relD}, we obtain
  \begin{equation}
    \label{eq:Dinterm}
  D = V_{c-\mathds{1}}^1\Big[(V_{c-\mathds{1}}^{1})^{-1}V_c^1-(V_{c-\mathds{1}}^1)^{-1}\Theta\Big] (V_{c-\mathds{1}}^1)^{-1},
\end{equation}
\noindent
so that the matrix $D$ is similar to $M-Y$, where $Y$ is the matrix $(V_{c-\mathds{1}}^1)^{-1}\Theta$.
Note that all the coefficients of $Y$ are equal to $0$, except maybe on the first column.
We shall denote by $y_1,\dots,y_s$ the coefficients in the first column of $Y$ and by $Y_1$
the first column of $Y$.
Given distinct $\lambda_1,\dots,\lambda_s\in\C\setminus\{1\}$,
the existence and uniqueness of $D$ and $\Theta$ such
that step \eqref{eq:heritagegamma} has order $s$ and the spectrum of $D$ is exactly
$\{\lambda_1,\dots,\lambda_s\}$ is equivalent to the existence and uniqueness of
$y_1,\dots,y_s\in\C$ such that $M-Y$ has spectrum $\{\lambda_1,\dots,\lambda_s\}$.

Let us fix $k\in\{1,\dots,s\}$. The existence of an eigenvector for $M-Y$ for the eigenvalue
$\lambda_k$ is exactly the existence of a nontrivial vector $Z_k\in\C^s$ such that
$(M-Y)Z_k=\lambda_k Z_k$. Let us denote by $I$ the identity matrix of size $s$ and $U$ the upper
triangular matrix such that $M=I-U$.
Observe that, in view of the definition of $M$,
  the entries above the diagonal of $U$ are negative.
The relation $(M-Y)Z_k=\lambda_k Z_k$ is equivalent to
$(1-\lambda_k) Z_k = (Y+U)Z_k$. Since $Y Z_k=z_1^{(k)}Y_1$ where $z_1^{(k)}$ is the first
component of $Z_k$, we infer that the relation $(M-Y)Z_k=\lambda_k Z_k$ is also equivalent to
\begin{equation}
  \label{eq:triangle}
  (1-\lambda_k) Z_k = z_1^{(k)}
  \left[
      \begin{matrix}
        y_1\\
        \vdots\\
        y_s
      \end{matrix}
    \right]
  +U Z_k.
\end{equation}
If $z_1^{(k)}=0$, then, because the matrix $U$ is strictly upper triangular and
$\lambda_k\neq 1$,
we have $Z_k=0$ and hence $Z_k$ is not an eigenvector.
Therefore, if $Z_k$ is an eigenvector, $z_1^{(k)}\neq 0$ and we can impose without loss of generality
that $z_1^{(k)}=1$. Together with \eqref{eq:triangle}, this implies, by recursion, that
\begin{equation}
  \label{eq:nilp}
  Z_k = \left( \sum_{p=0}^{s-1} \frac{1}{(1-\lambda_k)^{p+1}} U^p \right)
  \left[
      \begin{matrix}
        y_1\\
        \vdots\\
        y_s
      \end{matrix}
    \right].
\end{equation}
The equation on the first line in \eqref{eq:nilp} is of the form
\begin{equation}
  \label{eq:1ereligne}
  1 = P_1\left(\frac{1}{1-\lambda_k}\right) y_1+ \cdots + P_s\left(\frac{1}{1-\lambda_k}\right) y_s,
\end{equation}
where for all $i\in\{1,\dots,s\}$, $P_i$ is a polynomial of degree exactly $i$ (remind
that the matrix $U$ is strictly upper triangular with negative entries
above the diagonal).
Moreover, if the relation \eqref{eq:1ereligne} is verified for some $(y_1,\dots,y_s)$,
then there exists a solution $Z_k$ of \eqref{eq:triangle} with $z_1^{(k)}=1$:
one just has to compute the components of $Z_k$ in \eqref{eq:nilp} one after the other to obtain
an eigenvector of $M-Y$ for the eigenvalue $\lambda_k$.
As a summary, we have proved that, for all $k\in\{1,\dots,s\}$, $\lambda_k$ is an eigenvalue
of $M-Y$ if and only if \eqref{eq:1ereligne} holds.
Therefore, the fact that the spectrum of $M-Y$ is $\{\lambda_1,\cdots,\lambda_s\}$ is
equivalent to the linear system
\begin{equation}
  \label{eq:calcY}
  \left[
    \begin{matrix}
      P_1\left(\frac{1}{1-\lambda_1}\right) & \dots & P_s\left(\frac{1}{1-\lambda_1}\right)\\
      \vdots & & \vdots\\
      P_1\left(\frac{1}{1-\lambda_s}\right) & \dots & P_s\left(\frac{1}{1-\lambda_s}\right)
    \end{matrix}
  \right]
  \left[
      \begin{matrix}
        y_1\\
        \vdots\\
        y_s
      \end{matrix}
    \right]
    = {\mathds 1}.
\end{equation}
Since for all $i\in\{1,\dots,s\}$, the polynomial $P_i$ has degree $i$ and the $(\lambda_j)_{1\leq j\leq s}\in \left(\C\setminus\{1\}\right)^s$ are distinct, the system above is invertible, so that
it has a unique solution.
Since \eqref{eq:calcY} has a unique solution in $\C^s$ and the polynomials $(P_i)_{1\leq i\leq s}$
have real coefficients, it is easy to check that,
if the set $\{\lambda_1,\dots,\lambda_s\}$ is moreover stable under
complex conjugation, then $y_1,\dots,y_s$ are real numbers, and so are $\theta_1,\dots,\theta_s$
and the matrix $D$ has real coefficients using \eqref{eq:Dinterm}.
  This proves the theorem.
\end{proof}

\begin{corollary}
  \label{cor:existence}
  Assume $c_1,\dots,c_s$ are fixed and distinct as above. Then
  \begin{itemize}
    \item There exists $D\in{\mathcal M}_s(\R)$ and $\theta_1,\dots,\theta_s\in\R$ such that
      the step \eqref{eq:heritagegamma} is stable and has order $s$.
    \item There exists $D\in{\mathcal M}_s(\R)$ and $\theta_1,\dots,\theta_s\in\R$ such that
    the step \eqref{eq:heritagegamma} is strongly stable and has order $s$.
   \end{itemize}
 \end{corollary}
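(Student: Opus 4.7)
Both parts of the corollary are direct consequences of Theorem~\ref{th:specD}: the plan is simply to prescribe a suitable spectrum for $D$ and invoke the existence statement of that theorem.

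For the strong stability assertion, I would choose $s$ distinct real numbers $\lambda_1,\dots,\lambda_s$ lying in the open interval $(-1,1)$, for instance $\lambda_k=1/(k+1)$ for $k=1,\dots,s$. Such a set is automatically stable under complex conjugation (it is real) and contained in $\C\setminus\{1\}$. Theorem~\ref{th:specD} then provides a unique $(\theta_1,\dots,\theta_s)\in\R^s$ and $D\in\mathcal{M}_s(\R)$ such that step \eqref{eq:heritagegamma} has order $s$ and $D$ has spectrum exactly $\{\lambda_1,\dots,\lambda_s\}$. Then $\rho(D)=\max_k|\lambda_k|<1$, which is the definition of strong stability.

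For the stability assertion, one option is to note that a strongly stable step is in particular stable, so the first bullet follows immediately from the second; this is the quickest route. Alternatively, to exhibit a genuinely non-strongly stable stable step, I would pick $\lambda_1,\dots,\lambda_s$ distinct in the closed unit disk minus $\{1\}$, stable under complex conjugation, and including at least one eigenvalue of modulus exactly $1$ (for example $\lambda_1=-1$ together with any distinct real numbers in $(-1,1)\setminus\{1,-1\}$ for the remaining $\lambda_k$). Theorem~\ref{th:specD} again yields a real matrix $D$ and real $\theta_i$ realizing this spectrum with order $s$; since the $\lambda_k$ are pairwise distinct, $D$ is diagonalizable over $\C$, and in a basis of eigenvectors its powers are all bounded (each diagonal entry has modulus at most $1$). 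Hence $(D^n)_{n\geq 0}$ is bounded for some norm on $\mathcal{M}_s(\C)$, hence for every norm, and in particular for a norm on $\mathcal{M}_s(\R)$ restricted from $\mathcal{M}_s(\C)$.

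There is no real obstacle here: once Theorem~\ref{th:specD} is in hand, the whole content of the corollary is a choice of spectrum. The only mildly delicate point is making sure the chosen $\lambda_k$ satisfy all three constraints simultaneously (distinct, avoiding $1$, and stable under complex conjugation), which is why I would simply take real eigenvalues to dispatch both cases at once.
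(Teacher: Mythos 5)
Your proposal is correct and follows essentially the same route as the paper: the paper's proof likewise just chooses distinct $\lambda_1,\dots,\lambda_s\in\C\setminus\{1\}$, stable under complex conjugation, with $|\lambda_i|\leq 1$ (respectively $|\lambda_i|<1$) and applies Theorem~\ref{th:specD}. Your extra remark that distinct eigenvalues of modulus at most $1$ force $D$ to be diagonalizable with bounded powers is a correct (and slightly more explicit) justification of the stability claim than the paper gives.
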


 \begin{proof}
   Choose $\lambda_1,\dots,\lambda_s\in\C\setminus\{1\}$ distinct such that for all $i$,
   $|\lambda_i|\leq 1$ to obtain a stable method (or for all $i$, $|\lambda_i|< 1$
   to obtain a strongly stable method) in such a way that the set $\{\lambda_1,\dots,\lambda_s\}$
   is stable under complex conjugation and apply Theorem \ref{th:specD}.
 \end{proof}

 \begin{remark}
   In order to actually build the matrix $D$ and the coefficients $\theta_1,\dots,\theta_s$
   that define step \eqref{eq:heritagegamma} so that this step is stable (respectively strongly stable)
   and has
   order $s$, it is sufficient to fix distinct $c_1,\dots,c_s$ as above, and choose
   distinct
   $\lambda_1,\dots,\lambda_s\in\C\setminus\{1\}$ with modulus less (respectively strictly less)
   than $1$,
   and in such a way that the set $\{\lambda_1,\dots,\lambda_s\}$
   is stable under complex conjugation.
   Then, one forms system \eqref{eq:calcY}, the rows of which are the first rows of the
   right-hand side of \eqref{eq:nilp} for different values of $\lambda_k$, and one solves
   it for $y_1,\dots,y_s$. One easily computes $\Theta$ from $Y$ using
   the fact that $\Theta= V_{c-\mathds{1}}^1 Y$. In the end, the matrix $D$ is computed
   using \eqref{eq:relD}.
   Examples are provided in Section \ref{subsec:examples}.
 \end{remark}

 \subsection{Convergence of the method \eqref{eq:heritagegamma}-\eqref{eq:RKexpl}}
 \label{subsec:CV}
 
 In this section, we prove that the methods presented above, provided that they involve
 a step \eqref{eq:heritagegamma} with strong stability and order $s$, and a Runge--Kutta collocation method of order at least $s$, are indeed convergent in
 finite time, with order $s$, when applied to an ODE with sufficiently smooth vector field.
We assume the unknown $u$ of equation \eqref{eq:PDE} is scalar and that $L=0$.  In fact, up to a change of unknown, any ODE with an equilibrium can be cast into this form:
 \begin{equation}
   \label{eq:PDEwithL0}
   u'(t)=N(u(t))u(t).
 \end{equation}
Our methods and results extend to systems of ODEs of the form \eqref{eq:PDEwithL0} where the unknown $u$ is vector-valued and $N$ is a given smooth matrix-valued function. Similarly, our methods and results extend to the case of complex-valued functions. But for the sake of simplicity we focus on the real valued scalar case. 

\bigskip
 
We assume $N$ is defined and smooth on some open subset $\Omega$ of $\R$. We fix $u^0\in \Omega$. There exists a unique maximal solution to the Cauchy problem
 \eqref{eq:PDEwithL0} for $u(0)=u^0$. This solution is defined on an open interval of the form
 $(T_\star,T^\star)$ with $-\infty\leq T_\star<0<T^\star\leq +\infty$. We fix $T\in (0,T^\star)$.
 Since the maximal solution is smooth, we have $\sup_{t\in[0,T]}|N(u(t))|<+\infty$.
 Since it is defined on the compact interval $[0,T]$, one can choose $r>0$ such that
 \begin{equation}
 \label{eq:defr}
 V=\{u(t)+v\ |\ t\in[0,T], \ v\in \R,\ |v|\leq r\}\subset\Omega.
 \end{equation}
 We set $M=\sup_{t\in [0,T]} |N(u(t))|$ and we choose $m>0$ such that $M+m>\sup_{u\in V}|N(u)|$.

 We discretize the time as in Section \ref{subsec:IntroMeth}, with $h$ small enough to ensure
 that $T_\star<t_{-1}$.
 We start by focusing on the consistency of the method.
 Namely, we set for all $n\in\N$ such that $t_n\leq T$,
 \begin{equation}
   \label{eq:Rn1}
   R_n^1=\left [
       \begin{matrix}
         N(u(t_n+c_1 h)) \\
         \vdots\\
         N(u(t_n+c_s h))
       \end{matrix}
     \right ]
   -
   D
   \left [
   \begin{matrix}
         N(u(t_{n-1}+c_1 h)) \\
         \vdots\\
         N(u(t_{n-1}+c_s h))
       \end{matrix}
     \right ]
   -
   N(u(t_n))
   \left [
       \begin{matrix}
       \theta_1\\
       \vdots\\
       \theta_s
       \end{matrix}
     \right ].
 \end{equation}
 Similarly, we define $R_n^2$ as the vector of $\R^s$ with entry number $i$ equal to
 \begin{equation}
   \label{eq:Rn2}
   \left(R_n^2\right)_i=u(t_n+c_ih)-u(t_n)-h\sum_{j=1}^s a_{ij} N(u(t_n+c_j h))u(t_n+c_j h),
 \end{equation}
 and
  \begin{equation}
   \label{eq:Rn3}
  R_n^3=u(t_{n+1})-u(t_n)-h\sum_{i=1}^s b_i N(u(t_n+c_i h))u(t_n+c_i h).
 \end{equation}
 
 \begin{lemma} \label{lem:Bertrand}
   Assume that the function $N$ is sufficiently smooth, $u^0 \in \Omega$ and $T \in (0,T^\star)$. Suppose moreover that the numerical coefficients $(c_i)_{1\leq i \leq s}$, $(a_{i,j})_{1 \leq i,j \leq s}$ and $(b_i)_{1\leq i \leq s}$ define a Runge--Kutta collocation method of order $s$ and that the step \eqref{eq:heritagegamma} is of order $s$. For any norm on $\R^s$, there exists a constant $C>0$ such that for a sufficiently small $h>0$,
   \begin{equation}
   \label{eq:est_R1}
     \max_{n\geq 0, \ t_{n+1}\leq T} \| R_n^1\| \leq C h^s,
   \end{equation}
   \begin{equation}
    \label{eq:est_R2}
     \max_{n\geq 0,\ t_{n+1}\leq T} \| R_n^2\| \leq C h^{s+1},
   \end{equation}
   \begin{equation}
    \label{eq:est_R3}
     \max_{n\geq 0, \ t_{n+1}\leq T} | R_n^3| \leq C h^{s+1}.
   \end{equation}
 \end{lemma}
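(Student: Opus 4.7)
The three estimates reduce to local-error arguments that are, for $R_n^2$ and $R_n^3$, classical for collocation Runge--Kutta schemes, and, for $R_n^1$, controlled by the algebraic consistency identity of Definition \ref{def:cons}. Their common input is the smoothness of $u$ on $[0,T]$, which follows from the smoothness of $N$ together with standard ODE theory, and which forces all iterated derivatives of $u$ and of $\gamma := N\circ u$ to be uniformly bounded on $[0,T]$. This provides the uniform constant $C$ in $n$ below, provided $h$ is small enough that $t_{-1}=-h$ lies in the maximal interval of existence $(T_\star,T^\star)$.

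For $R_n^1$, the plan is to Taylor expand each component of the first two vectors in \eqref{eq:Rn1} around $t_n$ to order $s-1$. Setting $G\in\R^s$ with entries $G_j = \gamma^{(j-1)}(t_n)/(j-1)!$, one obtains $[\gamma(t_n+c_ih)]_i = V_c^h G + r^{(1)}$ and $[\gamma(t_{n-1}+c_ih)]_i = V_{c-\mathds{1}}^h G + r^{(2)}$, with remainders satisfying $\|r^{(1)}\|,\|r^{(2)}\|\leq C h^s$ thanks to the uniform bound on $\gamma^{(s)}$. The last term $N(u(t_n))(\theta_i)_i$ equals $\Theta G$ \emph{exactly}, because the only nonzero column of $\Theta$ is the first, and $G_1 = \gamma(t_n) = N(u(t_n))$. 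Substituting, the consistency identity $V_c^h = D V_{c-\mathds{1}}^h + \Theta$ makes the polynomial parts cancel, so $R_n^1 = r^{(1)} - D r^{(2)}$, which yields \eqref{eq:est_R1} with constant depending only on $\|D\|$ and on $\sup_{[0,T]}|\gamma^{(s)}|$.

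For $R_n^2$ and $R_n^3$, the equation \eqref{eq:PDEwithL0} gives $N(u(t_n+c_jh))u(t_n+c_jh) = u'(t_n+c_jh)$, so these reduce to the stage and full-step residuals of the underlying collocation Runge--Kutta scheme. Let $P_n$ be the degree $s-1$ Lagrange interpolant of the function $\tau\mapsto u'(t_n+\tau h)$ at the nodes $c_1,\dots,c_s$. Using the defining quadrature identities $a_{ij}=\int_0^{c_i}\ell_j(\tau)\,d\tau$ and $b_i=\int_0^1\ell_i(\tau)\,d\tau$, one has $h\sum_j a_{ij}u'(t_n+c_jh) = h\int_0^{c_i}P_n(\tau)\,d\tau$, together with the exact relations $u(t_n+c_ih)-u(t_n) = h\int_0^{c_i}u'(t_n+\tau h)\,d\tau$ and $u(t_{n+1})-u(t_n) = h\int_0^1 u'(t_n+\tau h)\,d\tau$. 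The standard error estimate for Lagrange interpolation of a smooth function gives $|u'(t_n+\tau h) - P_n(\tau)|\leq C h^s$ uniformly for $\tau\in[0,1]$; the outer integration over an interval of length at most one, combined with the factor $h$ in front, then yields the $O(h^{s+1})$ bounds \eqref{eq:est_R2} and \eqref{eq:est_R3}.

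The main technical subtlety is that the consistency identity of Definition \ref{def:cons} cancels Taylor contributions only up to degree $s-1$ (since the columns of $V_c^h$ correspond to monomials of degrees $0$ through $s-1$), which is exactly what produces the rate $h^s$ in \eqref{eq:est_R1} and nothing more; by contrast, $R_n^2$ and $R_n^3$ gain the extra factor of $h$ from the single outer integration, which is the reason for the mismatch between \eqref{eq:est_R1} on the one hand and \eqref{eq:est_R2}--\eqref{eq:est_R3} on the other. No further difficulty is expected beyond keeping these orders straight when feeding the three estimates into the forthcoming convergence proof.
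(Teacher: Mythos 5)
Your proof is correct and follows essentially the same route as the paper: Taylor expansion of $N\circ u$ at $t_n$ to order $s-1$, the exact identity $N(u(t_n))(\theta_i)_i=\Theta X(t_n)$ coming from the structure of $\Theta$, and cancellation of the polynomial parts through the order-$s$ relation \eqref{eq:rel_Vc_D}, leaving $R_n^1$ equal to the remainders (which are $O(h^s)$ with an $h$-independent $D$). For $R_n^2$ and $R_n^3$ the paper simply cites the classical collocation bounds, whereas you spell out the underlying Lagrange-interpolation argument; this is a harmless elaboration rather than a different approach.
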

\begin{proof} Let us start with the estimate on $R_n^1$. First of all we use a Taylor expansion and write for all $1 \leq i \leq s$
\begin{equation*}
N(u(t_n+c_ih))=\sum_{k=0}^{s-1} \dfrac{(c_ih)^k}{k!}(N \circ u)^{(k)}(t_n)+\int_{0}^{c_ih} \dfrac{(c_ih-\sigma)^{s-1}}{(s-1)!}(N \circ u)^{(s)}(t_n+\sigma) \rm d \sigma.
\end{equation*}
Let us denote by $X(t_n)$ the vector of $\R^s$ with $(N \circ u)^{(k-1)}(t_n)/(k-1)!$ as component number $k$. The relation above allows to write 
\begin{equation}
\label{eq:R1t1}
\left [
       \begin{matrix}
         N(u(t_n+c_1 h)) \\
         \vdots\\
         N(u(t_n+c_s h))
       \end{matrix}
     \right ]
= V_c X(t_n)
+
\left [
       \begin{matrix}
          \displaystyle\int_{0}^{c_1h} \frac{(c_1h-\sigma)^{s-1}}{(s-1)!}(N \circ u)^{(s)}(t_n+\sigma) \rm d \sigma\\
         \vdots\\
         \displaystyle\int_{0}^{c_sh} \frac{(c_sh-\sigma)^{s-1}}{(s-1)!}(N \circ u)^{(s)}(t_n+\sigma) \rm d \sigma
       \end{matrix}
     \right ]:=V_c X(t_n)+r_{1,1}^n.
\end{equation}
Similarly, we have 
\begin{eqnarray}
\left [
       \begin{matrix}
         N(u(t_{n-1}+c_1 h)) \\
         \vdots\\
         N(u(t_{n-1}+c_s h))
       \end{matrix}
     \right ]
& = & V_{c-\mathds{1}} X(t_n)
+
\left [
       \begin{matrix}
          \displaystyle\int_{0}^{(c_1-1)h} \frac{((c_1-1)h-\sigma)^{s-1}}{(s-1)!}(N \circ u)^{(s)}(t_n+\sigma) \rm d \sigma\\
         \vdots\\
         \displaystyle\int_{0}^{(c_s-1)h} \frac{((c_s-1)h-\sigma)^{s-1}}{(s-1)!}(N \circ u)^{(s)}(t_n+\sigma) \rm d \sigma
       \end{matrix}
  \right ]\nonumber\\ \label{eq:R1t2}
  & := & V_{c-\mathds{1}} X(t_n)+r_{1,2}^n.
\end{eqnarray}
Moreover we have
\begin{equation}
\label{eq:R1t3}
N(u(t_n))
   \left [
       \begin{matrix}
       \theta_1\\
       \vdots\\
       \theta_s
       \end{matrix}
     \right ]
=
\Theta X(t_n).
\end{equation}
Mutiplying \eqref{eq:R1t2} by $D$ and substracting the result and \eqref{eq:R1t3} to \eqref{eq:R1t1}, we infer that 
\begin{equation*}
R_n^1= V_c X(t_n)-D V_{c-1}X(t_n)-\Theta X(t_n) 
+ r_{1,1}^n- D r_{1,2}^n.
\end{equation*}  
Since the step \eqref{eq:heritagegamma} is of order $s$, we have using \eqref{eq:rel_Vc_D} 
\begin{equation*}
R_n^1= r_{1,1}^n- D r_{1,2}^n.
\end{equation*}  
Moreover with \eqref{eq:relD} we have 
$$D=V_c^1(V_{c-\mathds{1}}^1)^{-1}-\Theta (V_{c-\mathds{1}}^1)^{-1},$$
so that $D$ does not depend on $h$. Let $ \| \cdot  \|$ be a norm on $\R^s$. The vectors $r_{1,1}^n$ and $r_{1,2}^n$ satisfy
$$\max_{n\geq 0, \ t_{n+1}\leq T} \left(\| r^n_{1,1}\|+\| r^n_{1,2}\|\right) \leq C h^s,$$ for some $C>0$ and all sufficiently small $h>0$. This proves \eqref{eq:est_R1}. 
Since the Runge--Kutta method with coefficients $a_{i,j}$ and $b_i$ is a collocation method of order at least $s$ at points $c_i$, the bounds  \eqref{eq:est_R2} and \eqref{eq:est_R3} are classical (see for example Section II.1.2 in \cite{GNI2002}). 
\end{proof}

\begin{theorem} \label{th:conv}
Assume that the function $N$ is sufficiently smooth, $u^0 \in \Omega$ and $T \in (0,T^\star)$. Suppose moreover that the numerical coefficients $(c_i)_{1\leq i \leq s}$, $(a_{i,j})_{1 \leq i,j \leq s}$ and $(b_i)_{1\leq i \leq s}$ define a Runge--Kutta collocation method of order $s$ and that the step \eqref{eq:heritagegamma} is strongly stable and of order $s$.
Provided that $r$ is fixed as in \eqref{eq:defr} and $M$ and $m$ accordingly, there exists constants $C>0$ and $h_0>0$, such that, for all $h \in (0,h_0)$, if the initial data $u_0 \in \Omega$ (respectively $(\gamma_{-1+c_1},\cdots,\gamma_{-1+c_s}) \in N(V)^s$) is sufficiently close to its exact analogues $u^0 \in \Omega$ (respectively $(N(u(t_{-1}+c_1h)), \cdots, N(u(t_{-1}+c_sh))) \in N(V)^s$) in the sense of relations \eqref{eq:contrainteu0}--\eqref{eq:contraintegammainit}, then for all $n \in \N$ such that $t_{n-1} \leq T$, the step \eqref{eq:RKimpl} has a unique solution in $\R^s$ and for all $n \in \N$ such that $t_{n} \leq T$,
\begin{eqnarray}
&\left|\gamma_{n-1+c_i}\right| \leq M+m, \qquad \forall ~ i  \in \ldbrack 1,s\rdbrack&\label{eq:estconvgamma}\\
&\displaystyle{\max_{k \in \ldbrack 0, n \rdbrack} |u(t_k)-u_k| \leq e^{Cnh} \left[|u^0-u_0| + C \left(\max_{i \in \ldbrack 1,s \rdbrack} |\gamma_{-1+c_i} - N(u(t_{-1}+c_ih))| + h^s\right)\right].}&  \label{estconvu}
\end{eqnarray}

\end{theorem}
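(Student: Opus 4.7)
My plan is to run a simultaneous induction on $n$ that controls three things at once: (a) solvability of the linearly implicit system \eqref{eq:RKimpl}, (b) the a priori bound \eqref{eq:estconvgamma}, which keeps the iterates in the tube $V$ from \eqref{eq:defr} where $N$ is smooth with a Lipschitz constant $L_N$, and (c) the error estimate \eqref{estconvu}. The mechanism for (c) is a coupled pair of error recursions: a contractive one for the $\gamma$-error, driven by the strong stability of $D$, and a Gr\"onwall-type one for the $u$-error, driven by the collocation order $s$.

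\textbf{The two error recursions.} I would introduce
\[
\eps_n = \bigl(N(u(t_n+c_ih))-\gamma_{n+c_i}\bigr)_{1\le i\le s}\in\R^s,\qquad e_n = u(t_n)-u_n\in\R.
\]
Since $\rho(D)<1$, fix once and for all a norm $\|\cdot\|_*$ on $\R^s$ in which $\|D\|_*=\kappa<1$. Subtracting \eqref{eq:heritagegamma} from \eqref{eq:Rn1}, and using $|N(u(t_n))-N(u_n)|\le L_N|e_n|$ (valid as long as $u_n\in V$) together with \eqref{eq:est_R1}, gives
\[
\|\eps_n\|_* \le \kappa\,\|\eps_{n-1}\|_* + C_1\,|e_n| + C_2\,h^s.
\]
Next, set $E_n=(u(t_n+c_ih)-u_{n,i})_{1\le i\le s}$ and use the identity $N(u(t_n+c_jh))u(t_n+c_jh)-\gamma_{n+c_j}u_{n,j}=\eps_{n,j}u(t_n+c_jh)+\gamma_{n+c_j}(u(t_n+c_jh)-u_{n,j})$. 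Subtracting \eqref{eq:RKimpl} from \eqref{eq:Rn2} yields an affine relation of the form $(I_s-hA\Gamma_n)E_n=e_n\mathds{1}+hA(\text{inhomogeneity in }\eps_n)+R_n^2$, where $A=(a_{i,j})$ and $\Gamma_n=\mathrm{diag}(\gamma_{n+c_i})$. Under the a priori bound \eqref{eq:estconvgamma}, a Neumann series argument shows that for $h_0$ small enough this matrix is invertible (which yields both the solvability claim of the theorem and a bound on $\|E_n\|_*$ in terms of $|e_n|$, $\|\eps_n\|_*$ and $h^{s+1}$). Plugging this bound into the difference between \eqref{eq:RKexpl} and \eqref{eq:Rn3}, and using \eqref{eq:est_R3}, produces
\[
|e_{n+1}| \le (1+C_3 h)\,|e_n| + C_4 h\,\|\eps_n\|_* + C_5\,h^{s+1}.
\]

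\textbf{Coupling and bootstrap.} Injecting the $\eps_n$-recursion into the $e_n$-recursion, or equivalently applying a discrete Gr\"onwall lemma to the combined quantity $|e_n|+K\|\eps_{n-1}\|_*$ (for a well-chosen constant $K$ making the contraction of $D$ compensate the $C_1|e_n|$ term) over the at most $T/h$ time steps, delivers the exponential estimate \eqref{estconvu}, with a constant depending on $C_1,\dots,C_5$, $\kappa$ and $K$. The a priori bound \eqref{eq:estconvgamma} is then propagated by the triangle inequality: $|\gamma_{n+c_i}|\le |N(u(t_n+c_ih))|+\|\eps_n\|_* \le M + (\text{small quantity})\le M+m$, and the same smallness of $|e_n|$ keeps $u_n$ in $V$, provided the initial errors on $u_0$ and $(\gamma_{-1+c_i})_i$ are small enough (this is what the hypotheses \eqref{eq:contrainteu0}--\eqref{eq:contraintegammainit} quantify) and $h_0$ is small. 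This closes the induction.

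\textbf{Main obstacle.} The delicate point is exactly this bootstrap: the Lipschitz constant of $N$, the invertibility of the implicit step, and the very meaning of $|\gamma_{n+c_i}|\le M+m$ all rely on \eqref{eq:estconvgamma}, while \eqref{eq:estconvgamma} itself is derived from the error estimates. The constants have to be selected in the right order ($r$ first, then $m$, then $h_0$ and the allowed size of the initial error, and finally the Gr\"onwall constant $C$) so that the loop is not circular. A secondary technical subtlety is that the coefficient in front of $|e_n|$ in the $\eps_n$-recursion is $O(1)$, not $O(h)$; this forces the Gr\"onwall argument to be run on the combined quantity above rather than separately on $(e_n)$ and $(\eps_n)$, and motivates introducing the contraction factor $\kappa<1$ up front, which is where strong stability (rather than mere stability) is crucially used.
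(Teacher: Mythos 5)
Your proposal follows essentially the same route as the paper's proof: the same pair of coupled recursions for the $\gamma$-error and the $u$-error measured in a norm adapted to $D$ (which is exactly where strong stability enters), the same solvability of the implicit stage under the a priori bound $|\Gamma_n|_\infty\le M+m$ with $h\|A\|_\infty(M+m)$ small, the same unrolling of the geometric sum $\sum_k\delta^{n-k}\le (1-\delta)^{-1}$ injected into the $e_n$-recursion before the discrete Gr\"onwall step, and the same two-part bootstrap choosing $r$, $m$, $h_0$ and the admissible initial errors in that order. One small caveat on your alternative packaging: in the Lyapunov quantity $|e_n|+K\|\eps_{n-1}\|_*$ the weight $K$ must be taken of size $O(h)$ (with $K/h$ large compared to $\kappa/(1-\kappa)$), not an $h$-independent constant, since otherwise the $K C_1|e_n|$ contribution gives a growth factor bounded away from $1$ and ruins the Gr\"onwall estimate over the $T/h$ steps; the substitution route you list first, which is the one the paper takes, avoids this issue.
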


Let us first introduce all the notations we use in the proof.
We denote by $\Gamma_n$ the vector of $\R^s$ with component $i$ equal to $\gamma_{n+c_i}$. Let us define the convergence errors $P_n \in \R^s$ with component number $i$ equal to $(P_n)_i=N(u(t_n+c_ih))-\gamma_{n+c_i}$, $Q_n \in \R^s$ with component number $i$ equal to $Q_{n,i}=u(t_n+c_ih)-u_{n,i}$ (provided $u_{n,i}$ is well defined), and $e_n \in \R$ with $e_n=u(t_n)-u_n$. We set $z_n=\max_{0 \leq k \leq n} |e_k|$.  We denote by $|\cdot|_\infty$ the norm on $\R^s$ defined as the maximum of the absolute values of the components of the vectors. Moreover, we denote by $\| \cdot\|_\infty$ the norm on $\mathcal{M}_s(\R)$ induced by $|\cdot|_\infty$. In the following proof, the letter $C$ denotes a positive real number which does not depend on $h$ (but depends on $M$ and $r$ in particular) and whose value may vary from one line to the other. 

\begin{proof}
Since step \eqref{eq:heritagegamma} is strongly stable we have $\rho(D) <1$. Therefore, there exists a norm $| \cdot |_D$ on $\R^s$ such that the norm $\| \cdot \|_D$ induced by this norm on $\mathcal{M}_s(\R)$ satisfies  $\| D \|_D < 1$. In the following we set $\delta=\| D \|_D$. Since $\R^s$ is of dimension $s$, there exists a $\kappa \in (0,1]$ such that for all $x$ in $\R^s$, $\kappa |x|_D \leq  |x|_\infty \leq \dfrac{1}{\kappa}  |x|_D$. 

We divide the proof in two parts. First we assume an a priori bound for the numerical solution. Namely we assume that for all $n$ such that $t_n \leq T$:
\begin{itemize}
	\item (H1) $|\Gamma_n|_\infty \leq M+m$,
	\item (H2) the step \eqref{eq:RKimpl} has a unique solution $(u_{n,i})_{1\leq i \leq s}$ in $\R^s$, 
	\item (H3) $u_n \in V$. 
\end{itemize}
We show that, in this case, we have an explicit bound for the convergence errors $P_n$ and $z_n$ (see equations \eqref{eq:recestznfin} and \eqref{eq:Pnderoule3}). 

Second, we assume that $h_0$ and the initial errors $P_{-1}$ and $e_0$ are small enough and we show that the bounds of the first part of the proof are indeed satisfied. 

\vskip0.2cm
\noindent{\bf First part.} In addition to the bounds above, we assume that $h\in (0,1)$ and $n$ satisfy $t_{n+1} \leq T$.  
Substracting \eqref{eq:heritagegamma} from \eqref{eq:Rn1} we obtain
\begin{equation}
\label{eq:recerrPn}
P_n=DP_{n-1}+ \left(N(u(t_n))-N(u_n)\right)\left [
       \begin{matrix}
       \theta_1\\
       \vdots\\
       \theta_s
       \end{matrix}
     \right ]
+ R_n^1.
\end{equation}
We infer that 
\begin{equation}
\label{eq:recestPn}
|P_n|_D \leq \|D \|_D |P_{n-1}|_D + C |e_n| + |R_n^1|_D,
\end{equation}
where the constant $C$ is the product of the Lipschitz constant of $N$ over the compact $V$ times $|(\theta_1, \cdots, \theta_s)^t|_D$ (recall that $N$ is a smooth function over the open set
$\Omega$, hence it is Lipschitz-continuous on the compact $V\subset\Omega$). 

Substracting \eqref{eq:RKimpl} from \eqref{eq:Rn2} we obtain
\begin{eqnarray*}
\lefteqn{Q_{n,i}}\\
&=&e_n + h \sum_{j=1}^s a_{i,j} \left(N(u(t_n+c_jh))u(t_n+c_jh)-\gamma_{n+c_j}u_{n,j}\right) +(R_n^2)_i \\
&=& e_n + h \sum_{j=1}^s a_{i,j} \left(N(u(t_n+c_jh))-\gamma_{n+c_j}\right)u(t_n+c_jh) +h \sum_{j=1}^s a_{i,j} \gamma_{n+c_j}\left(u(t_n+c_jh)-u_{n,j}\right) +(R_n^2)_i \\
&=& e_n + h \sum_{j=1}^s a_{i,j} P_{n,j}u(t_n+c_jh) +h \sum_{j=1}^s a_{i,j} \gamma_{n+c_j}Q_{n,j} +(R_n^2)_i.
\end{eqnarray*}
We infer that
\begin{equation*}
|Q_n|_\infty \leq |e_n| + Ch |P_n|_\infty + Ch |\Gamma_n|_\infty |Q_n|_\infty + |R_n^2|_\infty , 
\end{equation*}
which gives with the first point of the assumptions above
\begin{equation*}
|Q_n|_\infty \leq |e_n| + Ch |P_n|_\infty + Ch(M+m) |Q_n|_\infty + |R_n^2|_\infty. 
\end{equation*}
Provided that $Ch(M+m) \leq 1/2$, we have
\begin{equation}
\label{eq:recestQn}
|Q_n|_\infty \leq 2|e_n| + Ch |P_n|_\infty+ 2|R_n^2|_\infty. 
\end{equation}

Substracting \eqref{eq:RKexpl} from \eqref{eq:Rn3} we obtain 
\begin{eqnarray*}
e_{n+1}&=& e_n + h \sum_{i=1}^s b_i \left(N(u(t_n+c_ih))-\gamma_{n+c_i}\right)u(t_n+c_ih) +h \sum_{i=1}^s b_i \gamma_{n+c_i}\left(u(t_n+c_ih)-u_{n,i}\right) +R_n^3 \\
&=& e_n + h \sum_{i=1}^s b_i P_{n,i}u(t_n+c_ih) +h \sum_{i=1}^s b_i \gamma_{n+c_i}Q_{n,i} +R_n^3. 
\end{eqnarray*}
We infer that
\begin{equation*}
|e_{n+1}| \leq |e_n| + Ch |P_n|_\infty + Ch |\Gamma_n|_\infty |Q_n|_\infty + |R_n^3| , 
\end{equation*}
which gives with the first point of the assumptions above
\begin{equation*}
|e_{n+1}| \leq |e_n| + Ch |P_n|_\infty + Ch(M+r) |Q_n|_\infty + |R_n^3|. 
\end{equation*}
Using \eqref{eq:recestQn}, we have
\begin{equation}
\label{eq:recesten}
|e_{n+1}| \leq (1+Ch)|e_n| + Ch |P_n|_\infty+ Ch |R_n^2|_\infty+ |R_n^3|. 
\end{equation}
From \eqref{eq:recestPn} we have by induction 
\begin{equation}
\label{eq:Pnderoule}
|P_n|_D \leq \delta^{n+1} |P_{-1}|_D + C \sum_{k=0}^n \delta^{n-k} \left(|e_k|+ |R_k^1|_D\right).
\end{equation}
Using the norm equivalence and \eqref{eq:Pnderoule} in \eqref{eq:recesten} we obtain
\begin{equation}
\label{eq:recesten2}
|e_{n+1}| \leq (1+Ch)|e_n| + \dfrac{Ch}{\kappa} \left[\delta^{n+1} |P_{-1}|_D + C \sum_{k=0}^n \delta^{n-k} \left(|e_k|+ |R_k^1|_D\right)\right]+ Ch |R_n^2|_\infty+ |R_n^3|, 
\end{equation}
which gives with Lemma \ref{lem:Bertrand}
\begin{equation}
\label{eq:recesten3}
|e_{n+1}| \leq (1+Ch)|e_n| + \dfrac{Ch}{\kappa} \left[\delta^{n+1} |P_{-1}|_D + C \sum_{k=0}^n \delta^{n-k} \left(|e_k|+ |R_k^1|_D\right)\right]+ Ch^{s+2}+ Ch^{s+1}.
\end{equation}
Using the maximal error defined previously and the fact that $\delta <1$ since the step \eqref{eq:heritagegamma} is strongly stable, we have
\begin{eqnarray*}
|e_{n+1}| &\leq& (1+Ch)z_n + Ch \left[\delta^{n+1} |P_{-1}|_D + \left(z_n+ h^s\right)\sum_{k=0}^n \delta^{n-k} \right]+ Ch^{s+1} \\
&\leq& (1+Ch)z_n + Ch \left[\delta^{n+1} |P_{-1}|_D + \left(z_n+ h^s\right)\dfrac{1}{1-\delta} \right]+ Ch^{s+1} ,
\end{eqnarray*}
and then 
\begin{equation}
\label{eq:recesten4}
|e_{n+1}| \leq (1+Ch)z_n + Ch \delta^{n+1} |P_{-1}|_D + Ch^{s+1}.
\end{equation}
Using that $z_{n+1}=\max \{z_n, |e_{n+1}|\}$ and $\delta\in (0,1)$, we infer
\begin{equation}
\label{eq:recesten5}
z_{n+1}\leq (1+Ch)z_n + Ch  |P_{-1}|_D + Ch^{s+1}.
\end{equation}
By induction it follows that for all $n$ in $\N$ such that $t_{n}\leq T$,
\begin{eqnarray}
z_{n}&\leq& (1+Ch)^n z_0 + Ch  \left(|P_{-1}|_D + h^{s}\right)\sum_{k=0}^{n-1} (1+Ch)^k  \nonumber \\
&\leq& e^{Cnh} z_0 + Ch  \left(|P_{-1}|_D + h^{s}\right)\dfrac{(1+Ch)^{n}}{1+Ch-1} \nonumber \\
&\leq& e^{Cnh}\left( z_0 + C \left(|P_{-1}|_D +h^{s}\right)\right)\nonumber \\
&\leq& e^{Cnh}\left( z_0 + C \left(|P_{-1}|_\infty +h^{s}\right)\right). \label{eq:recestznfin}
\end{eqnarray}
Using \eqref{eq:Pnderoule} and the same estimations as above, we have moreover
\begin{equation}
\label{eq:Pnderoule2}
|P_n|_D \leq |P_{-1}|_D + C(z_n+h^s).
\end{equation}
We infer
\begin{equation}
\label{eq:Pnderoule3}
|P_n|_\infty \leq Ce^{Cnh}\left(z_0+|P_{-1}|_\infty +h^s\right).
\end{equation}

\vskip0.2cm
\noindent{\bf Second part.} 
From now on, we denote by $C$ the maximum of the constants appearing in the right hand sides of \eqref{eq:recestznfin} and \eqref{eq:Pnderoule3}. Choose $h_0 \in (0,1)$ sufficiently small to have $h_0<\min\{-T_\star,T^\star\}$ and $Ce^{CT}h_0^s<r$ and $Ce^{CT}h_0^s<m$ and $h_0 \|A\|_\infty (M+m)<1$. Assume $u_0, \gamma_{-1+c_1}, \dots, \gamma_{-1+c_s} \in \R$ and $h \in (0,h_0)$ satisfy 
\begin{equation}
\label{eq:contrainteu0}
e^{CT}\left( |u^0-u_0| + C \left(\max_{i \in \ldbrack 1,s \rdbrack} |\gamma_{-1+c_i} - N(u(t_{-1}+c_ih))| +h_0^{s}\right)\right)<r,
\end{equation} 
and
\begin{equation}
\label{eq:contraintegammainit}
Ce^{CT}\left(|u^0-u_0|+\max_{i \in \ldbrack 1,s \rdbrack} |\gamma_{-1+c_i} - N(u(t_{-1}+c_ih))| +h_0^s\right)<m.
\end{equation} 

First, with \eqref{eq:Pnderoule3} and \eqref{eq:contraintegammainit}, we have 
$$|P_{0}|_\infty = \max_{i \in \ldbrack 1,s \rdbrack} |\gamma_{0+c_i} - N(u(t_{0}+c_ih))| < m.$$
Therefore by triangle inequality we have 
$$|\Gamma_{0}|_\infty \leq |P_{0}|_\infty + \left| (N(u(t_{0}+c_ih)))_{1\leq i \leq s} \right|_\infty<M+m.$$
And then, the hypothesis (H1) of the first part is satisfied for $n=0$.

Moreover, with \eqref{eq:contrainteu0}, we have $|u^0-u_0| \leq r$ so that $u_0 \in V$ and the hypothesis (H3) of the first part is satisfied with $n=0$. We infer that the system \eqref{eq:RKimpl} (with $L = 0$) has a unique solution in $\R^s$ since we assumed $h \leq h_0 < 1/(\|A\|_\infty(M+m))$. This implies that the hypothesis (H2) of the first part is satisfied for $n=0$. Then we can apply the analysis of the first part to obtain \eqref{eq:recestznfin} and \eqref{eq:Pnderoule3} with $n=1$. Using \eqref{eq:contrainteu0} and \eqref{eq:contraintegammainit}, we infer that hypotheses (H1), (H2) and (H3) are satisfied with $n=1$ and the result follows by induction on $n$.

\end{proof}

\begin{remark}
  The proof may look like following the usual strategy for the proof of convergence of a numerical
  method. However, note that the definition of strong stability (Definition \ref{def:stab}) plays
  a central role in the proof, and this is not the case in classical theory.
  Moreover, estimations such as \eqref{eq:Pnderoule} in
  \eqref{eq:recesten} to obtain \eqref{eq:recesten2} are {\it not} classical.
\end{remark}

\begin{remark}
  \label{rem:initgamma}
  Given $u_0\in\Omega$ close to $u^0\in\Omega$, before starting the time-stepping
  method \eqref{eq:RKimpl}-\eqref{eq:RKexpl}, one needs to first compute the $s$ approximations $(\gamma_{-1+c_i})_{1\leq i\leq s}$ of
  $(N(u((-1+c_i)h)))_{1\leq i\leq s}$. These quantities enter the error estimate \eqref{estconvu}.
  This computation can be done efficiently by determinating $s$ approximations of $(u((-1+c_i)h))_{1\leq i\leq s}$
  using a sufficiently high order method for \eqref{eq:PDE} backwards in time, provided the equation
  makes sense. Alternatively, for example for the nonlinear heat equation, for which
  running \eqref{eq:PDE} backwards in time makes no sense, one can use a sufficiently
  high order method from $u_0$ to compute $s+1$ approximations of $(u(c_i h))_{1\leq i\leq s}$
  and $u(h)$ over one time step, and then start the linearly implicit method
  \eqref{eq:RKimpl}-\eqref{eq:RKexpl} from time $h$ until time $T$.
\end{remark}

\subsection{Examples of linearly implicit methods} \label{subsec:examples}

In this section we present possible choices of methods of order 1, 2, 4 and 6. The general building procedure is the following: We choose $s \in \N^\star$, we fix $0 \leq c_1 <c_2< \dots <c_s \leq 1$ and we compute $a_{i,j}$ and $b_i$ for $1 \leq i,j \leq s$ using the formulas 
$$a_{i,j}=\int_0^{c_i} \mathcal{L}_j(\tau) {\rm d}\tau \qquad {\rm and} \qquad b_i = \int_0^{1} \mathcal{L}_i(\tau) {\rm d}\tau,$$
where $\mathcal{L}_i(\tau)=\displaystyle{\prod_{\substack{k=1 \\ k\neq i}}^s } \dfrac{\left(\tau-c_k\right)}{(c_i-c_k)}$ is the $i^{\rm th}$ Lagrange polynomial at points $c_1, \dots, c_s$. This way, the coefficients $a_{i,j}$, $b_i$ and $c_i$ are those of a Runge--Kutta collocation method. Next we choose $\lambda_1,\dots,\lambda_s \in \C \setminus \{1\}$ with moduli strictly less than 1, all distincts and in such a way that the set $\{\lambda_1,\dots,\lambda_s\}$ is invariant under complex conjugation. We compute the polynomials $P_1,\dots, P_s$ appearing in \eqref{eq:1ereligne} defined using \eqref{eq:nilp} in the proof of Theorem \ref{th:specD}. We solve \eqref{eq:calcY} for $y_1, \dots, y_s$ and compute $\theta_1, \dots, \theta_s$ using $\Theta=(V_{c-\mathds{1}}^1)Y$. Finally, we compute the matrix $D$ using \eqref{eq:relD}. This way, we define a step \eqref{eq:heritagegamma} that is strongly stable and of order $s$ (see Definitions \ref{def:stab} and  \ref{def:cons}). Using Theorem \ref{th:conv}, the numerical method \eqref{eq:heritagegamma}-\eqref{eq:RKexpl} is convergent of order $s$. 

\vskip0.2cm
\noindent \underline{\itshape A linearly implicit method of order 1:} We choose $s=1$ and $c_1=1$ so as to rely on the implicit Euler method. Then $a_{1,1}=1$ and $b_1=1$. Choosing $\lambda_1=1/2$, we have $y_1=1/2$ and $\theta_1=1/2$. 

\vskip0.2cm
\noindent \underline{\itshape Two linearly implicit methods of order 2:} 

\underline{\itshape 1- With Gauss points:} For $s=2$ and the Gauss points $c_1=\frac{1}{2}-\frac{\sqrt{3}}{6}, c_2=\frac{1}{2}+\frac{\sqrt{3}}{6}$. Then the Runge--Kutta collocation method has Butcher tableau
\[
\renewcommand\arraystretch{1.2}
\begin{array}
{c|cc}
 \frac{1}{2}-\frac{\sqrt{3}}{6} & \frac{1}{4} & \frac{1}{4}-\frac{\sqrt{3}}{6}\\
\frac{1}{2}+\frac{\sqrt{3}}{6} & \frac{1}{4}+\frac{\sqrt{3}}{6} & 1/4\\
\hline
& \frac{1}{2} & \frac{1}{2} 
\end{array}.
\]

\underline{\itshape 2- With uniform points:} For $s=2$ and the uniform points $c_1=0, c_2=1$. Then the Runge--Kutta collocation method has Butcher tableau
\[
\renewcommand\arraystretch{1.2}
\begin{array}
{c|cc}
0 & 0 & 0\\
1 & 1/2 & 1/2\\
\hline
& \frac{1}{2} & \frac{1}{2} 
\end{array}.
\]

For the two cases, we choose $\lambda_1=1/2, \lambda_2=-1/2$. This leads to $y_1=2, y_2=3/4$ and $\theta_1=y_1+(c_1-1)y_2, \theta_2=y_1+(c_2-1)y_2$.  

\vskip0.2cm
\noindent \underline{\itshape A linearly implicit method of order 4:} We choose $s=4$ and $c_1=0, c_2=1/3,c_3=2/3, c_4=1$. Then the Runge--Kutta collocation method has Butcher tableau
\[
\renewcommand\arraystretch{1.2}
\begin{array}
{c|cccc}
0 & 0 & 0 & 0 & 0\\
1/3& 1/8 & 19/72 & -5/72 & 1/72\\
2/3 & 1/9 & 4/9 & 1/9 & 0\\
1& 1/8 & 3/8 & 3/8 & 1/8\\
\hline
& 1/8 & 3/8 & 3/8 & 1/8 
\end{array}.
\]
Choosing $\lambda_1=0, \lambda_2=1/4, \lambda_3=1/2, \lambda_4=3/4$ for  we have 
$$\begin{pmatrix} 
y_1 \\ y_2 \\ y_3\\ y_4 \end{pmatrix} = \begin{pmatrix} 5/2 \\ 117/64 \\ 11/32 \\ 1/64 \end{pmatrix}\qquad {\rm and} \qquad \begin{pmatrix} 
\theta_1 \\ \theta_2 \\ \theta_3\\ \theta_4 \end{pmatrix} = \begin{pmatrix}  1 \\ 1235/864 \\ 833/432\\ 5/2 \end{pmatrix}.$$ 

\vskip0.2cm
\noindent \underline{\itshape A linearly implicit method of order 6:} We choose $s=6$ and $(c_i)_{1\leq i \leq 6}$ a uniform subdivision of $[0,1]$. Then the Runge--Kutta collocation method has Butcher tableau
\[
\renewcommand\arraystretch{1.2}
\begin{array}
{c|cccccc}
0 & 0 & 0 & 0 & 0 & 0 & 0\\
1/5& 19/288 & 1427/7200 & -133/1200 & 241/3600 & -173/7200 & 3/800\\
2/5 & 14/225 & 43/150 & 7/225 & 7/225 & -1/75 & 1/450 \\
3/5& 51/800 & 219/800 & 57/400 & 57/400 & -21/800 & 3/800\\
4/5 & 14/225 & 64/225 & 8/75 & 64/225 & 14/225 & 0 \\
1& 19/288 & 25/96 & 25/144 & 25/144 & 25/96 & 19/288\\
\hline
& 19/288 & 25/96 & 25/144 & 25/144 & 25/96 & 19/288 
\end{array}.
\]
Choosing $\lambda_k=\dfrac{e^{i(k-1)\frac{\pi}{3}}}{2}$ for $k=1,\dots,6$, we have 
$$\begin{pmatrix} 
y_1 \\ y_2 \\ y_3\\ y_4\\y_5 \\y_6 \end{pmatrix} = \begin{pmatrix} 6 \\ 2783/320 \\ 1239/256 \\ 659/512 \\ 43/256 \\ 21/2560 \end{pmatrix} \qquad {\rm and} \qquad \begin{pmatrix} 
\theta_1 \\ \theta_2 \\ \theta_3\\ \theta_4\\\theta_5 \\\theta_6 \end{pmatrix} = \begin{pmatrix} 65/64 \\ 193389/125000 \\ 1133667/500000 \\ 1608733/500000 \\ 1111047/250000 \\ 6 \end{pmatrix}.$$ 


\begin{remark}
  In the linearly implicit methods given as examples above, the choice of $(\lambda_i)_{1\leq i\leq s}$
  is somehow arbitrary, and the only condition we impose is that they ensure that
  step \eqref{eq:heritagegamma} is strongly stable (see Definition \ref{def:stab}).
  This implies that these methods are convergent for ODEs (see Theorem \ref{th:conv}).
  In order to ensure additional features of a linearly implicit method, this choice has to be made
  carefully (see for example in Section \ref{subsec:heat} a linearly implicit
  method that preserves non-negativity and energy-dissipation for a nonlinear heat equation).
  For a general class of evolution PDE, the choice of the collocation method as well
  as that of $(\lambda_i)_{1\leq i\leq s}$ has to be made carefully, in particular to ensure a taylored
  stability property of the linearly implicit method.
  This question will be investigated in a forthcoming paper.
\end{remark}

\section{Numerical experiments}
\label{sec:num}

In this section, we illustrate the properties of the methods described in Section
\ref{subsec:IntroMeth} and analysed in Section \ref{subsec:CV}.
We first present numerical examples on ODEs, with a scalar case in Section \ref{subsec:scalODE}.
In particular, we illustrate the results above, such as Theorem \ref{th:conv}, for several
methods introduced above, and we compare the results we obtain with that obtained using other
classical numerical methods.
Then, we present numerical experiments for PDEs that fit the framework used in Section
\ref{subsec:IntroMeth}
but do not fit {\it stricto sensu} the framework of the analysis carried out in Section
\ref{subsec:CV}. This allows for comparison with classical methods for the same problems anyway.
We first focus on a nonlinear Schr\"odinger equation in Section \ref{subsec:Schro}
and then move to a nonlinear heat equation in Section \ref{subsec:heat}.
The methods described and analysed in this paper would also be relevant
for several other examples of semilinear evolution equation of the form \eqref{eq:PDE}.

When comparing the efficiency of numerical methods in this section,
we consider as a measure of performance
the (lowest possible) CPU time required to achieve a given precision on the numerical
result. This CPU time has indeed disadvantages since it depends on the algorithms used
to solve the problems, the software used to implement the algorithms and the machine on which
the software is run. However, we believe one cannot talk about efficiency whithout taking into
account some form of CPU time. And, for reproducibility issues, we detail below as much as possible
which discretizations and algorithms are used to implement the numerical methods that we consider.
Moreover, we try to be as fair as possible when implementing methods from the litterature to compare
them with the linearly implicit methods introduced in this paper.

As we shall see in this section, the efficiency of the linearly implicit methods introduced
in this paper is similar to that of classical one-step methods with constant step size
from the literature (see Section \ref{subsec:scalODE}).
In contrast, the linealy implicit methods sometimes outperform standard methods
when applied to several evolution
PDE problems, that we consider, once discretized, as high dimensional systems of ODEs
(see Sections \ref{subsec:Schro} and \ref{subsec:heat}).
In the following, the computations are carried out using MATLAB and the linear systems are solved
using the backslash MATLAB command. In particular, we do not build a taylored
method to solve the linear systems numerically and the gain in computational time one can obtain using
linearly implicit methods can surely be improved using taylored methods depending on the matrix
structures. This choice is not optimal in terms of efficiency, but is fairly similarly done for
all the methods below.

\subsection{Application to a scalar nonlinear ODE}
\label{subsec:scalODE}
We consider the scalar ODE
\begin{equation}
\label{eq:EDO3}
  u'(t)=-u(t)-u^2(t).
\end{equation}

This corresponds to taking $L$ as minus the identity operator and $N(u)=-u$ in
\eqref{eq:PDE}.
The exact maximal solution starting from $u_0>0$ at $t=0$ is given for $t \geq 0$ by
\begin{equation*}
  u(t)=\frac{1}{(\frac{1}{u_0}+1){\rm e}^{t}-1}.
\end{equation*}

We start with methods of order 1.
We use the linearly implicit method of order 1 introduced in Section \ref{subsec:examples}.
We compare the results we obtain on the problem above with the Euler implicit and explicit schemes
as well as the Lie splitting method.
We choose $u^0=u_0=1/3$, $\gamma_{-1+c_1}=\gamma_0=N(u^0)$ and the final time $T=2$.
The results are displayed in Figure \ref{fig:ordre1_EDO}.
The global error is defined as $z_N$
  (with the notations of the proof of Theorem \ref{th:conv}) at final time $T$ with $N$ such that
  $Nh=T$, where $h$ is the time step.
Numerical experiments indicate that the four schemes are of order 1.
For the linearly implicit scheme, this is a consequence of Theorem \ref{th:conv}.
Moreover the CPU time required to reach a given numerical error is much lower for
the Lie splitting than for the linearly implicit method and for the linearly implicit method than for the explicit Euler scheme and the implicit Euler scheme. 

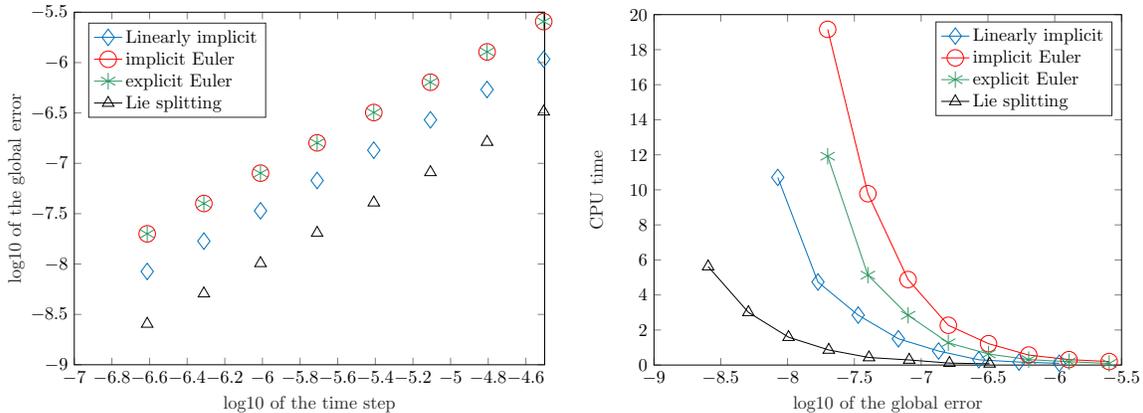
\begin{figure}[!h]
  \centering
   \begin{tabular}{cc}
\resizebox{0.48\textwidth}{!}{
%
%
\definecolor{mycolor1}{rgb}{0.00000,0.44700,0.74100}%
\definecolor{mycolor2}{rgb}{0.2,0.6,0.4}%
\begin{tikzpicture}

\begin{axis}[%
width=10cm,
height=7.5cm,
at={(0.758in,0.481in)},
scale only axis,
xmin=-7,
xmax=-4.5,
xlabel style={font=\color{white!15!black}},
xlabel={log10 of the time step},
ymin=-9,
ymax=-5.5,
ylabel style={font=\color{white!15!black}},
ylabel={log10 of the global error},
axis background/.style={fill=white},
legend style={at={(0.03,0.97)}, anchor=north west, legend cell align=left, align=left, draw=white!15!black}
]
\addplot [color=mycolor1, draw=none, mark size=5pt, mark=diamond, mark options={solid, mycolor1}]
  table[row sep=crcr]{%
-4.50514997831991	-5.96636311107174\\
-4.80617997398389	-6.26739428806687\\
-5.10720996964787	-6.56842486934128\\
-5.40823996531185	-6.86945515813429\\
-5.70926996097583	-7.17048527352206\\
-6.01029995663981	-7.4715153386196\\
-6.31132995230379	-7.77254549028603\\
-6.61235994796777	-8.07357554092571\\
};
\addlegendentry{Linearly implicit}

\addplot [color=red, draw=none, mark size=5.0pt, mark=o, mark options={solid, red}]
  table[row sep=crcr]{%
-4.50514997831991	-5.59266548544308\\
-4.80617997398389	-5.89369122568333\\
-5.10720996964787	-6.19471909047088\\
-5.40823996531185	-6.49574803382238\\
-5.70926996097583	-6.79677749193149\\
-6.01029995663981	-7.09780720185103\\
-6.31132995230379	-7.39883712202114\\
-6.61235994796777	-7.69986712885821\\
};
\addlegendentry{implicit Euler}

\addplot [color=mycolor2, draw=none, mark size=5.0pt, mark=asterisk, mark options={solid, mycolor2}]
  table[row sep=crcr]{%
-4.50514997831991	-5.5926484623658\\
-4.80617997398389	-5.89368271305475\\
-5.10720996964787	-6.19471483724242\\
-5.40823996531185	-6.49574589259063\\
-5.70926996097583	-6.79677640693406\\
-6.01029995663981	-7.09780666176814\\
-6.31132995230379	-7.39883681008059\\
-6.61235994796777	-7.69986675018113\\
};
\addlegendentry{explicit Euler}

\addplot [color=black, draw=none, mark size=4.0pt, mark=triangle, mark options={solid, black}]
  table[row sep=crcr]{%
-4.50514997831991	-6.48742277468779\\
-4.80617997398389	-6.78845062852403\\
-5.10720996964787	-7.08947577519423\\
-5.40823996531185	-7.39049427047066\\
-5.70926996097583	-7.69163189888726\\
-6.01029995663981	-7.99264816466707\\
-6.31132995230379	-8.29334401486249\\
-6.61235994796777	-8.59456320713261\\
};
\addlegendentry{Lie splitting}

\end{axis}
\end{tikzpicture}
\resizebox{0.48\textwidth}{!}{
%
%
\definecolor{mycolor1}{rgb}{0.00000,0.44700,0.74100}%
\definecolor{mycolor2}{rgb}{0.2,0.6,0.4}%

\begin{tikzpicture}[scale=0.5]

\begin{axis}[%
width=10cm,
height=7.5cm,
at={(0.772in,0.473in)},
scale only axis,
xmin=-9,
xmax=-5.5,
xlabel style={font=\color{white!15!black}},
xlabel={log10 of the global error},
ymin=0,
ymax=20,
ylabel style={font=\color{white!15!black}},
ylabel={CPU time},
axis background/.style={fill=white},
title style={font=\bfseries},
legend style={legend cell align=left, align=left, draw=white!15!black}
]
\addplot [color=mycolor1, mark size=5pt, mark=diamond, mark options={solid, mycolor1}]
  table[row sep=crcr]{%
-5.96636311107174	0.100000000000009\\
-6.26739428806687	0.170000000000002\\
-6.56842486934128	0.299999999999997\\
-6.86945515813429	0.799999999999997\\
-7.17048527352206	1.5\\
-7.4715153386196	2.84999999999999\\
-7.77254549028603	4.74000000000001\\
-8.07357554092571	10.71\\
};
\addlegendentry{Linearly implicit}

\addplot [color=red, mark size=5.0pt, mark=o, mark options={solid, red}]
  table[row sep=crcr]{%
-5.59266548544308	0.200000000000003\\
-5.89369122568333	0.310000000000002\\
-6.19471909047088	0.569999999999993\\
-6.49574803382238	1.22\\
-6.79677749193149	2.27000000000001\\
-7.09780720185103	4.88\\
-7.39883712202114	9.78\\
-7.69986712885821	19.16\\
};
\addlegendentry{implicit Euler}

\addplot [color= mycolor2, mark size=5.0pt, mark=asterisk, mark options={solid,  mycolor2}]
  table[row sep=crcr]{%
-5.5926484623658	0.0900000000000034\\
-5.89368271305475	0.189999999999998\\
-6.19471483724242	0.310000000000002\\
-6.49574589259063	0.629999999999995\\
-6.79677640693406	1.27000000000001\\
-7.09780666176814	2.84\\
-7.39883681008059	5.13\\
-7.69986675018113	11.92\\
};
\addlegendentry{explicit Euler}

\addplot [color=black, mark size=4.0pt, mark=triangle, mark options={solid, black}]
  table[row sep=crcr]{%
-6.48742277468779	0.0699999999999932\\
-6.78845062852403	0.109999999999985\\
-7.08947577519423	0.29000000000002\\
-7.39049427047066	0.429999999999978\\
-7.69163189888726	0.860000000000014\\
-7.99264816466707	1.59\\
-8.29334401486249	3.00999999999999\\
-8.59456320713261	5.62\\
};
\addlegendentry{Lie splitting}

\end{axis}
\end{tikzpicture}
  \end{tabular}
  \caption{Comparison of methods of order 1 applied to \eqref{eq:EDO3}: On the left hand side, maximal numerical error as a function of the time step (logarithmic scales); on the right hand side, CPU time (in seconds) as a function of the maximal numerical error. }
  \label{fig:ordre1_EDO}
\end{figure}

We then consider methods of order 2. We compare the linearly implicit method of order 2 defined in Section \ref{subsec:examples} for Gauss points with other methods of the literature: 
the midpoint method with Butcher tableau
\[
\renewcommand\arraystretch{1.2}
\begin{array}
{c|c}
1/2 & 1/2\\
\hline
& 1 
\end{array},
\]
the RK2 method with  Butcher tableau
\[
\renewcommand\arraystretch{1.2}
\begin{array}
{c|cc}
0 & 0 & 0\\
1/2 & 1/2 & 0\\
\hline
& 0 & 1 
\end{array},
\]
and the Strang splitting method. 
We choose $u^0=u_0=0.9$, $\gamma_{-1+c_1}=N(u((-1+c_1)h)),~\gamma_{-1+c_2}=N(u((-1+c_2)h))$ and the final time $T=2$.

The results are displayed in Figure \ref{fig:ordre2_EDO}. Once again the four methods are of order 2. This is a consequence of Theorem \ref{th:conv} for the linearly implicit method. The CPU time required for a given numerical error is much lower for the Strang splitting scheme than for the other three methods which perform similarly.

\begin{figure}[!h]
  \centering
   \begin{tabular}{cc}
\resizebox{0.49\textwidth}{!}{
%
%
\definecolor{mycolor1}{rgb}{0.00000,0.44700,0.74100}%
\definecolor{mycolor2}{rgb}{0.2,0.6,0.4}%

\begin{tikzpicture}

\begin{axis}[%
width=10cm,
height=7.5cm,
at={(0.758in,0.481in)},
scale only axis,
xmin=-5,
xmax=-1.5,
xlabel style={font=\color{white!15!black}},
xlabel={log10 of the time step},
ymin=-13,
ymax=-5,
ylabel style={font=\color{white!15!black}},
ylabel={log10 of the global error},
axis background/.style={fill=white},
legend style={at={(0.03,0.97)}, anchor=north west, legend cell align=left, align=left, draw=white!15!black}
]
\addplot [color=mycolor1, draw=none, mark size=5pt, mark=diamond, mark options={solid, mycolor1}]
  table[row sep=crcr]{%
-1.90308998699194	-5.59102600732413\\
-2.20411998265592	-6.19412279261806\\
-2.50514997831991	-6.79673092546809\\
-2.80617997398389	-7.39906888359179\\
-3.10720996964787	-8.00126967862436\\
-3.40823996531185	-8.60340060537913\\
-3.70926996097583	-9.20549555665516\\
-4.01029995663981	-9.80757458657062\\
-4.31132995230379	-10.4096375962879\\
-4.61235994796777	-11.0117065655131\\
-4.91338994363176	-11.6129722671281\\
};
\addlegendentry{Linearly implicit}

\addplot [color=red, draw=none, mark size=5.0pt, mark=o, mark options={solid, red}]
  table[row sep=crcr]{%
-1.90308998699194	-5.56902767738796\\
-2.20411998265592	-6.17108279199145\\
-2.50514997831991	-6.77314547723394\\
-2.80617997398389	-7.37520614291003\\
-3.10720996964787	-7.97726629896441\\
-3.40823996531185	-8.57932631202711\\
-3.70926996097583	-9.18138613862865\\
-4.01029995663981	-9.78344752098174\\
-4.31132995230379	-10.3855273532058\\
-4.61235994796777	-10.9876189746999\\
-4.91338994363176	-11.5893358432319\\
};
\addlegendentry{Midpoint}

\addplot [color=mycolor2, draw=none, mark size=5.0pt, mark=asterisk, mark options={solid, mycolor2}]
  table[row sep=crcr]{%
-1.90308998699194	-5.17109419609133\\
-2.20411998265592	-5.77637154359796\\
-2.50514997831991	-6.38003763755386\\
-2.80617997398389	-6.98289907993005\\
-3.10720996964787	-7.58536000722824\\
-3.40823996531185	-8.18762040681424\\
-3.70926996097583	-8.78978059471086\\
-4.01029995663981	-9.39189010884149\\
-4.31132995230379	-9.99397476709849\\
-4.61235994796777	-10.5960225139444\\
-4.91338994363176	-11.198037334441\\
};
\addlegendentry{RK2}

\addplot [color=black, draw=none, mark size=4.0pt, mark=triangle, mark options={solid, black}]
  table[row sep=crcr]{%
-1.90308998699194	-6.86763897899263\\
-2.20411998265592	-7.46969774244254\\
-2.50514997831991	-8.07175255460909\\
-2.80617997398389	-8.67381276349685\\
-3.10720996964787	-9.27588503504335\\
-3.40823996531185	-9.87790346045594\\
-3.70926996097583	-10.4798600888902\\
-4.01029995663981	-11.0844405461764\\
-4.31132995230379	-11.6597984959747\\
-4.61235994796777	-12.3966917588404\\
-4.91338994363176	-12.4002823613854\\
};
\addlegendentry{Strang splitting}

\end{axis}
\end{tikzpicture}
\resizebox{0.48\textwidth}{!}{
%
%
\definecolor{mycolor1}{rgb}{0.00000,0.44700,0.74100}%
\definecolor{mycolor2}{rgb}{0.2,0.6,0.4}%

\begin{tikzpicture}

\begin{axis}[%
width=10cm,
height=7.5cm,
at={(0.745in,0.489in)},
scale only axis,
xmin=-13,
xmax=-5,
xlabel style={font=\color{white!15!black}},
xlabel={log10 of the global error},
ymin=0,
ymax=0.5,
ylabel style={font=\color{white!15!black}},
ylabel={CPU time},
axis background/.style={fill=white},
legend style={legend cell align=left, align=left, draw=white!15!black}
]
\addplot [color=mycolor1, mark size=5pt, mark=diamond, mark options={solid, mycolor1}]
  table[row sep=crcr]{%
-5.59102600732413	0.200000000000003\\
-6.19412279261806	0\\
-6.79673092546809	0.0100000000000051\\
-7.39906888359179	0.0300000000000011\\
-8.00126967862436	0.0399999999999991\\
-8.60340060537913	0.0499999999999972\\
-9.20549555665516	0.0600000000000023\\
-9.80757458657062	0.109999999999999\\
-10.4096375962879	0.140000000000001\\
-11.0117065655131	0.18\\
-11.6129722671281	0.420000000000002\\
};
\addlegendentry{Linearly implicit}

\addplot [color=red, mark size=5.0pt, mark=o, mark options={solid, red}]
  table[row sep=crcr]{%
-5.56902767738796	0.019999999999996\\
-6.17108279199145	0\\
-6.77314547723394	0.0200000000000031\\
-7.37520614291003	0.00999999999999801\\
-7.97726629896441	0.0399999999999991\\
-8.57932631202711	0.0200000000000031\\
-9.18138613862865	0.0499999999999972\\
-9.78344752098174	0.0799999999999983\\
-10.3855273532058	0.120000000000005\\
-10.9876189746999	0.229999999999997\\
-11.5893358432319	0.460000000000001\\
};
\addlegendentry{Midpoint}

\addplot [color= mycolor2, mark size=5.0pt, mark=asterisk, mark options={solid,  mycolor2}]
  table[row sep=crcr]{%
-5.17109419609133	0.00999999999999801\\
-5.77637154359796	0.0100000000000051\\
-6.38003763755386	0.00999999999999801\\
-6.98289907993005	0.00999999999999801\\
-7.58536000722824	0.00999999999999801\\
-8.18762040681424	0.0200000000000031\\
-8.78978059471086	0.0499999999999972\\
-9.39189010884149	0.0399999999999991\\
-9.99397476709849	0.0800000000000054\\
-10.5960225139444	0.149999999999999\\
-11.198037334441	0.269999999999996\\
};
\addlegendentry{RK2}

\addplot [color=black, mark size=4.0pt, mark=triangle, mark options={solid, black}]
  table[row sep=crcr]{%
-6.86763897899263	0.0100000000000051\\
-7.46969774244254	0\\
-8.07175255460909	0\\
-8.67381276349685	0\\
-9.27588503504335	0.00999999999999801\\
-9.87790346045594	0.0200000000000031\\
-10.4798600888902	0.0300000000000011\\
-11.0844405461764	0.0499999999999972\\
-11.6597984959747	0.0399999999999991\\
-12.3966917588404	0.0600000000000023\\
-12.4002823613854	0.140000000000001\\
};
\addlegendentry{Strang splitting}

\end{axis}
\end{tikzpicture}
  \end{tabular}
  \caption{Comparison of methods of order 2 applied to \eqref{eq:EDO3}: On the left hand side, maximal numerical error as a function of the time step (logarithmic scales); on the right hand side, CPU time (in seconds) as a function of the maximal numerical error. }
  \label{fig:ordre2_EDO}
\end{figure}
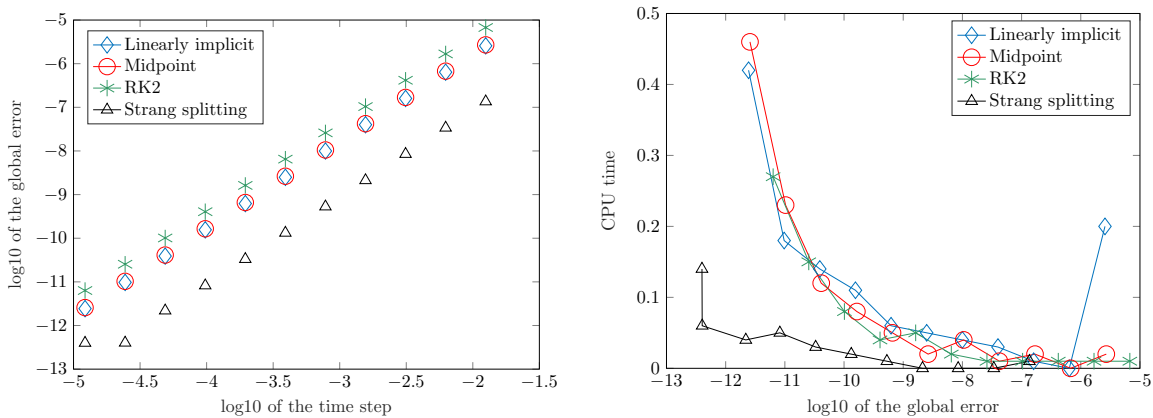

Similar results are obtained (but not displayed here) for methods of order four and six which illustrate Theorem \ref{th:conv} for the corresponding linearly implicit methods introduced in Section \ref{subsec:examples}. Moreover the CPU time required to reach a given numerical error is always higher for the linearly implicit schemes than for other classical methods of the same order for the ODE \eqref{eq:EDO3}.

\eject
\subsection{Application to the nonlinear Schr\"odinger equation}
\label{subsec:Schro}

\subsubsection{One dimensional nonlinear Schr\"odinger equation: The soliton case}
\label{subsubsec:Schro1d}

In this section, we consider the nonlinear one dimensional Schr\"odinger equation:
\begin{equation}
\label{eq:NLS1D}
i\partial_t u = -\partial^2_x u - q|u|^2 u,
\end{equation}
which corresponds to the evolution problem \eqref{eq:PDE} with $L=i\partial^2_x$ and $N(u)=iq|u|^2$. We consider the initial condition 
\begin{equation*}
u_0(x)=\sqrt{\dfrac{2a}{q}}{\rm sech}\left(\sqrt{a}x\right),
\end{equation*}
where $q>0$ and $a=q^2/16$, so that the corresponding exact solution of \eqref{eq:NLS1D} is the zero speed soliton and reads
\begin{equation}
\label{eq:soliton1D}
u(t,x)=\sqrt{\dfrac{2a}{q}}{\rm sech}\left(\sqrt{a}x\right)\exp(iat).
\end{equation}
We use $q=4$ and $a=1$ for the numerical simulations. The final time is set to $T=5$.
For the space discretization, we consider the interval $[-50,50]$ with homogeneous
Dirichlet boundary conditions
since the exact solution \eqref{eq:soliton1D} decays very fast when $|x|$ tends to $+\infty$.
We use $2^{14}$ equispaced points in space for methods of order 1 in time and $2^{18}$ equispaced points in space for methods of order 2 in time. For splitting methods, one has to integrate numerically equation \eqref{eq:NLS1D} with $q=0$. This is done {\it via} the approximation 
\begin{equation}
\label{eq:approxexp}
\exp\left(ihB\right)=\left(I+i\dfrac{h}{2} B\right)\left(I-i\dfrac{h}{2} B\right)^{-1}+\mathcal{O}(h^3),
\end{equation}
where $I$ denotes the identity matrix, $B$ the Laplacian with Dirichlet boundary conditions matrix,
and $h>0$ the small time step.
In particular, the splitting methods we use below are also linearly implicit
(the nonlinear part of the equation is integrated exactly).
The numerical error we consider for all numerical methods is the discrete $L^2$-norm of the difference between the numerical solution and the projection of the exact solution \eqref{eq:soliton1D} on the space grid at final time $T$. 

First we compare methods of order one. We consider the linearly implicit method of order 1 introduced in Section \ref{subsec:examples}, the implicit Euler scheme and the Lie splitting scheme. For the linearly implicit method, we initialize the scheme with $u^0=u_0=u(0,\cdot)$, $\gamma_{-1+c_1}=N(u((-1+c_1)h,\cdot))$. The results are given in Figure \ref{fig:ordre1_NLS1Dsoliton}. The figure on the left hand side shows that the three methods are of order 1. This illustrates the fact that the conclusion of Theorem \ref{th:conv} for the linearly implicit method holds numerically in this PDE context. For such a simulation, we can see, on the figure on the right hand side, that now the CPU time required to reach a given error is smaller for the linearly implicit method than for the fully implicit Euler method. However the Lie splitting method is the least time consuming method since it is explicit (in fact our implementation of the Lie splitting method makes it linearly implicit, see \eqref{eq:approxexp})
and has a good error constant. 

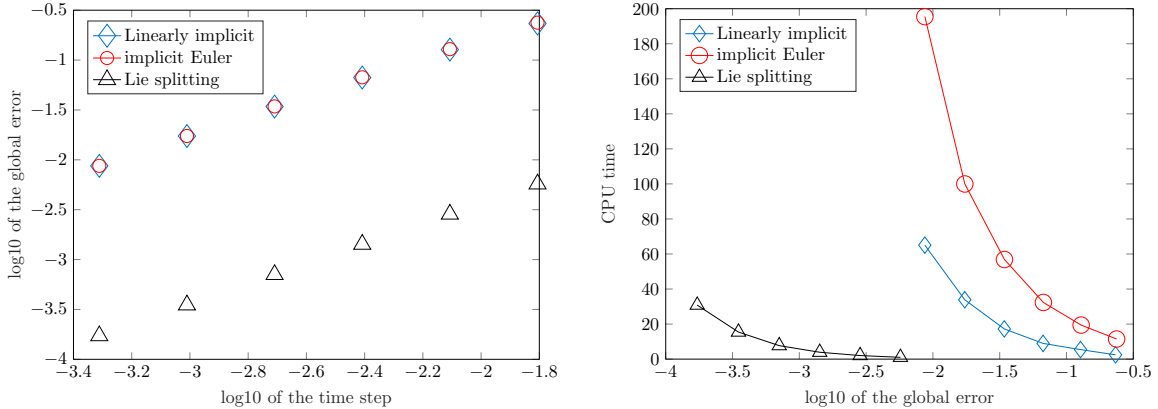
\begin{figure}[!h]
  \centering
   \begin{tabular}{cc}
\resizebox{0.49\textwidth}{!}{
%
%
\definecolor{mycolor1}{rgb}{0.00000,0.44700,0.74100}%
\begin{tikzpicture}

\begin{axis}[%
width=10cm,
height=7.5cm,
at={(1.011in,0.642in)},
scale only axis,
xmin=-3.4,
xmax=-1.8,
xlabel style={font=\color{white!15!black}},
xlabel={log10 of the time step},
ymin=-4,
ymax=-0.5,
ylabel style={font=\color{white!15!black}},
ylabel={log10 of the global error},
axis background/.style={fill=white},
legend style={at={(0.03,0.97)}, anchor=north west, legend cell align=left, align=left, draw=white!15!black}
]
\addplot [color=mycolor1, draw=none, mark size=7.0pt, mark=diamond, mark options={mycolor1}]
  table[row sep=crcr]{%
-1.80617997398389	-0.632851564436005\\
-2.10720996964787	-0.894941706449429\\
-2.40823996531185	-1.17528311865359\\
-2.70926996097583	-1.46573203982411\\
-3.01029995663981	-1.76161963710205\\
-3.31132995230379	-2.06054121633667\\
};
\addlegendentry{Linearly implicit}

\addplot [color=red, draw=none, mark size=4.0pt, mark=o, mark options={red}]
  table[row sep=crcr]{%
-1.80617997398389	-0.624719314656724\\
-2.10720996964787	-0.890480844371899\\
-2.40823996531185	-1.17293906987317\\
-2.70926996097583	-1.46452916549013\\
-3.01029995663981	-1.76100995984052\\
-3.31132995230379	-2.0602340594404\\
};
\addlegendentry{implicit Euler}

\addplot [color=black, draw=none, mark size=6.0pt, mark=triangle, mark options={black}]
  table[row sep=crcr]{%
-1.80617997398389	-2.24281019405233\\
-2.10720996964787	-2.54496884036537\\
-2.40823996531185	-2.84728933372178\\
-2.70926996097583	-3.15043670338034\\
-3.01029995663981	-3.45545124495294\\
-3.31132995230379	-3.76420685658136\\
};
\addlegendentry{Lie splitting}

\end{axis}
\end{tikzpicture}
\resizebox{0.48\textwidth}{!}{
%
%
\definecolor{mycolor1}{rgb}{0.00000,0.44700,0.74100}%
\begin{tikzpicture}

\begin{axis}[%
width=10cm,
height=7.5cm,
at={(1.011in,0.642in)},
scale only axis,
xmin=-4,
xmax=-0.5,
xlabel style={font=\color{white!15!black}},
xlabel={log10 of the global error},
ymin=0,
ymax=200,
ylabel style={font=\color{white!15!black}},
ylabel={CPU time},
axis background/.style={fill=white},
legend style={at={(0.03,0.97)}, anchor=north west, legend cell align=left, align=left, draw=white!15!black}
]
\addplot [color=mycolor1, mark size=5.0pt, mark=diamond, mark options={solid, mycolor1}]
  table[row sep=crcr]{%
-0.632851564436005	2.44999999999999\\
-0.894941706449429	5.43000000000001\\
-1.17528311865359	8.97\\
-1.46573203982411	17.23\\
-1.76161963710205	33.88\\
-2.06054121633667	65.06\\
};
\addlegendentry{Linearly implicit}

\addplot [color=red, mark size=5.0pt, mark=o, mark options={solid, red}]
  table[row sep=crcr]{%
-0.624719314656724	11.47\\
-0.890480844371899	19.43\\
-1.17293906987317	32.34\\
-1.46452916549013	56.85\\
-1.76100995984052	99.97\\
-2.0602340594404	195.48\\
};
\addlegendentry{implicit Euler}

\addplot [color=black, mark size=5.0pt, mark=triangle, mark options={solid, black}]
  table[row sep=crcr]{%
-2.24281019405233	1.03000000000009\\
-2.54496884036537	2.02999999999997\\
-2.84728933372178	3.91999999999996\\
-3.15043670338034	7.69000000000005\\
-3.45545124495294	15.4399999999999\\
-3.76420685658136	30.8200000000001\\
};
\addlegendentry{Lie splitting}

\end{axis}
\end{tikzpicture}
  \end{tabular}
  \caption{Comparison of methods of order 1 applied to \eqref{eq:NLS1D}: On the left hand side, maximal numerical error as a function of the time step (logarithmic scales); on the right hand side, CPU time (in seconds) as a function of the maximal numerical error. }
  \label{fig:ordre1_NLS1Dsoliton}
\end{figure}


We then consider methods of order 2. For this experiment we use the linearly implicit method of order 2 introduced in Section \ref{subsec:examples} for the uniform points, the Crank-Nicolson scheme \cite{CN47,DFP81} (for which we solve the nonlinear system using a fixed point algorithm) and the Strang splitting method \cite{Lubich08}. For the implementation of the Strang splitting method, we use the classical conjugation with the Lie splitting method which we recall briefly below and relies on the identity
\begin{equation}
\label{eq:LieStrang}
\left(\exp\left(i\dfrac{h}{2}B\right)\circ \Phi_h \circ \exp\left(i\dfrac{h}{2}B\right)\right)^k=\exp\left(i\dfrac{h}{2}B\right) \circ \left(\Phi_h \circ \exp\left(ihB\right)\right)^k\circ \exp\left(-i\dfrac{h}{2}B\right),
\end{equation}
where $\Phi_h$ denotes the numerical flow of the nonlinear part of \eqref{eq:NLS1D} defined componentwise using the function $v \mapsto \exp(ihq|v|^2)v$, and $k$ is any nonnegative integer. The numerical flow of the Lie splitting method is $\Phi_h^{Lie}=\Phi_h \circ \exp(ihB)$ and that of the Strang splitting method is $\Phi_h^{Strang}=\exp\left(ihB/2\right)\circ \Phi_h \circ \exp\left(ihB/2\right)$. Therefore, relation \eqref{eq:LieStrang} also reads
\begin{equation}
\label{eq:LieStrang2}
\left(\Phi_h^{Strang}\right)^k=\exp\left(i\dfrac{h}{2}B\right) \circ \left(\Phi_h^{Lie}\right)^k\circ \exp\left(-i\dfrac{h}{2}B\right),
\end{equation}
for all nonnegative integer $k$. The implementation of the Strang splitting method we use for numerical simulations uses both the right hand side of the equation \eqref{eq:LieStrang2} and the approximation formula \eqref{eq:approxexp}.
For the linearly implicit method, we initialize the scheme with $u^0=u_0=u(0,\cdot)$, $\gamma_{-1+c_1}=N(u((-1+c_1)h,\cdot)),~\gamma_{-1+c_2}=N(u((-1+c_2)h,\cdot))$.
The results are displayed in Figure \ref{fig:ordre2_NLS1Dsoliton}. The numerical order of each method is the one expected {\it i.e.} 2. This illustrates once again the numerical relevance of Theorem \ref{th:conv} beyond the ODE context. As we can see on the figure on the right hand side, the CPU time required to reach a given error for the Crank-Nicolson method is higher than the one for  the linearly implicit method of order 2 which is also a little higher than the one for the Strang splitting method. 

\begin{remark}
  For this example, if instead of using the linearly implicit method of order 2 with uniform points, we use the linearly implicit method of order 2 with Gauss points, we numerically obtain the superconvergence of the method and a numerical order equal to 4. This is due to the fact that in this particular case the modulus of the solution is constant in time as it can be seen on \eqref{eq:soliton1D}. 
\end{remark}

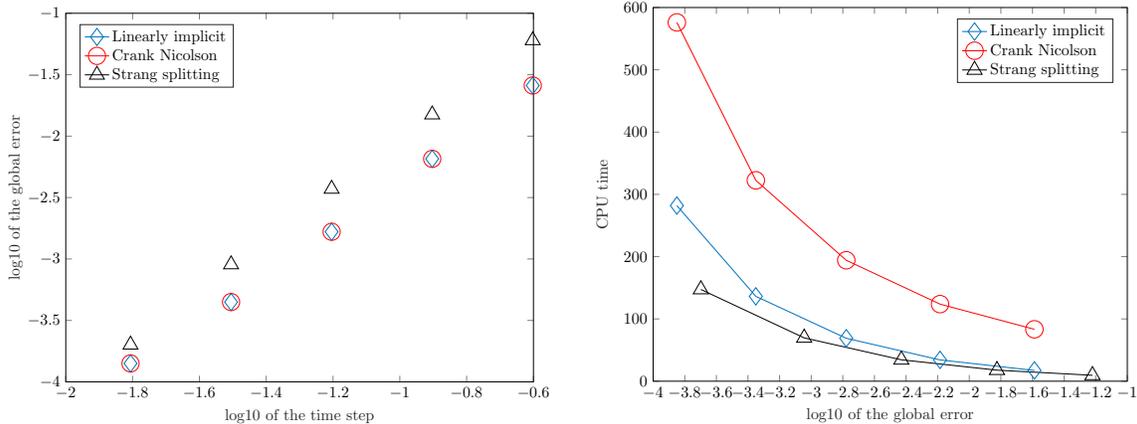
\begin{figure}[!h]
  \centering
   \begin{tabular}{cc}
\resizebox{0.48\textwidth}{!}{
%
%
\definecolor{mycolor1}{rgb}{0.00000,0.44700,0.74100}%
\begin{tikzpicture}

\begin{axis}[%
width=4.521in,
height=3.566in,
at={(1.011in,0.642in)},
scale only axis,
xmin=-2,
xmax=-0.6,
xlabel style={font=\color{white!15!black}},
xlabel={log10 of the time step},
ymin=-4,
ymax=-1,
ylabel style={font=\color{white!15!black}},
ylabel={log10 of the global error},
axis background/.style={fill=white},
legend style={at={(0.03,0.97)}, anchor=north west, legend cell align=left, align=left, draw=white!15!black}
]
\addplot [color=mycolor1, draw=none, mark size=6.0pt, mark=diamond, mark options={solid, mycolor1}]
  table[row sep=crcr]{%
-0.602059991327962	-1.5877560093991\\
-0.903089986991944	-2.18481135182164\\
-1.20411998265592	-2.77833043005767\\
-1.50514997831991	-3.35039240695028\\
-1.80617997398389	-3.85007788412759\\
};
\addlegendentry{Linearly implicit}

\addplot [color=red, draw=none, mark size=6.0pt, mark=o, mark options={solid, red}]
  table[row sep=crcr]{%
-0.602059991327962	-1.58773584053535\\
-0.903089986991944	-2.18478900561259\\
-1.20411998265592	-2.77830762668812\\
-1.50514997831991	-3.3503708007979\\
-1.80617997398389	-3.85006067204541\\
};
\addlegendentry{Crank Nicolson}

\addplot [color=black, draw=none, mark size=6.0pt, mark=triangle, mark options={solid, black}]
  table[row sep=crcr]{%
-0.602059991327962	-1.22052314995563\\
-0.903089986991944	-1.824757707846\\
-1.20411998265592	-2.42969715316417\\
-1.50514997831991	-3.04469888612012\\
-1.80617997398389	-3.69811041763705\\
};
\addlegendentry{Strang splitting}

\end{axis}
\end{tikzpicture}%
} 
& 
\resizebox{0.48\textwidth}{!}{
%
%
\definecolor{mycolor1}{rgb}{0.00000,0.44700,0.74100}%
\begin{tikzpicture}

\begin{axis}[%
width=4.521in,
height=3.566in,
at={(1.011in,0.642in)},
scale only axis,
xmin=-4,
xmax=-1,
xlabel style={font=\color{white!15!black}},
xlabel={log10 of the global error},
ymin=0,
ymax=600,
ylabel style={font=\color{white!15!black}},
ylabel={CPU time},
axis background/.style={fill=white},
legend style={at={(0.97,0.97)}, anchor=north east, legend cell align=left, align=left, draw=white!15!black}
]
\addplot [color=mycolor1, mark size=6.0pt, mark=diamond, mark options={solid, mycolor1}]
  table[row sep=crcr]{%
-1.5877560093991	17.3300000000008\\
-2.18481135182164	34.2199999999993\\
-2.77833043005767	68.9800000000005\\
-3.35039240695028	136.28\\
-3.85007788412759	282.01\\
};
\addlegendentry{Linearly implicit}

\addplot [color=red, mark size=6.0pt, mark=o, mark options={solid, red}]
  table[row sep=crcr]{%
-1.58773584053535	83.3499999999995\\
-2.18478900561259	123.780000000001\\
-2.77830762668812	194.19\\
-3.3503708007979	322.51\\
-3.85006067204541	576.07\\
};
\addlegendentry{Crank Nicolson}

\addplot [color=black, mark size=6.0pt, mark=triangle, mark options={solid, black}]
  table[row sep=crcr]{%
-1.22052314995563	9.5\\
-1.824757707846	17.75\\
-2.42969715316417	34.4000000000005\\
-3.04469888612012	69.6099999999997\\
-3.69811041763705	147.31\\
};
\addlegendentry{Strang splitting}

\end{axis}
\end{tikzpicture}%
}
\end{tabular}
  \caption{Comparison of methods of order 2 applied to \eqref{eq:NLS1D}: On the left hand side, maximal numerical error as a function of the time step (logarithmic scales); on the right hand side, CPU time (in seconds) as a function of the maximal numerical error. }
  \label{fig:ordre2_NLS1Dsoliton}
\end{figure}

\begin{figure}[!h]
  \centering
   \begin{tabular}{cc}
\resizebox{0.48\textwidth}{!}{
%
%
\definecolor{mycolor1}{rgb}{0.00000,0.44700,0.74100}%

\begin{tikzpicture}

\begin{axis}[%
width=6.028in,
height=4.754in,
at={(1.011in,0.642in)},
scale only axis,
xmin=0,
xmax=5,
xlabel style={font=\color{white!15!black}},
xlabel style={font=\Large},
xlabel={time},
every x tick label/.append style={font=\Large},
ymin=-1.2e-09,
ymax=2e-10,
ylabel style={font=\color{white!15!black}},
ylabel style={font=\Large},
ylabel={squared $L^2$-norm variation},
every y tick label/.append style={font=\Large},
axis background/.style={fill=white},
legend style={at={(0.634,0.621)}, anchor=south west, legend cell align=left, align=left, draw=white!15!black, font=\Large}
]
\addplot [color=mycolor1, draw=none, mark size=6pt, mark=diamond, mark options={solid, mycolor1}]
  table[row sep=crcr]{%
0.015625	1.11022302462516e-15\\
0.09375	-1.33152378012369e-11\\
0.171875	-5.28308508052078e-11\\
0.25	-1.09218079025197e-10\\
0.328125	-1.73405401149296e-10\\
0.40625	-2.39794628598133e-10\\
0.484375	-3.05411473888739e-10\\
0.5625	-3.68797881122873e-10\\
0.640625	-4.29300595072846e-10\\
0.71875	-4.86670481691931e-10\\
0.796875	-5.40854250274947e-10\\
0.875	-5.91882987066583e-10\\
0.953125	-6.39823638515225e-10\\
1.03125	-6.84750922630428e-10\\
1.109375	-7.26735227551956e-10\\
1.1875	-7.65835506122414e-10\\
1.265625	-8.02108712782967e-10\\
1.34375	-8.35596369874736e-10\\
1.421875	-8.66338445426607e-10\\
1.5	-8.94367579995503e-10\\
1.578125	-9.19720188896633e-10\\
1.65625	-9.4242480486173e-10\\
1.734375	-9.62513069246995e-10\\
1.8125	-9.80019621010797e-10\\
1.890625	-9.94986204538861e-10\\
1.96875	-1.00745001230251e-09\\
2.046875	-1.01745889491411e-09\\
2.125	-1.02506692023496e-09\\
2.203125	-1.0303301545278e-09\\
2.28125	-1.03331299072806e-09\\
2.359375	-1.03408659413162e-09\\
2.4375	-1.0327242394581e-09\\
2.515625	-1.02931063672429e-09\\
2.59375	-1.02393460377215e-09\\
2.671875	-1.01668828911272e-09\\
2.75	-1.00767261201895e-09\\
2.828125	-9.96992044477452e-10\\
2.90625	-9.84754278121613e-10\\
2.984375	-9.71077218636651e-10\\
3.0625	-9.56078660685478e-10\\
3.140625	-9.39878286310147e-10\\
3.21875	-9.22601550712443e-10\\
3.296875	-9.04376129540196e-10\\
3.375	-8.85329365374332e-10\\
3.453125	-8.65593374754781e-10\\
3.53125	-8.45296277418583e-10\\
3.609375	-8.24571522173301e-10\\
3.6875	-8.03548450001301e-10\\
3.765625	-7.82355846773441e-10\\
3.84375	-7.61121277115251e-10\\
3.921875	-7.39970418273117e-10\\
4	-7.19026616025076e-10\\
4.078125	-6.98406443788713e-10\\
4.15625	-6.78225253736287e-10\\
4.234375	-6.58592846924932e-10\\
4.3125	-6.39617026010342e-10\\
4.390625	-6.21393714261842e-10\\
4.46875	-6.04017502681131e-10\\
4.546875	-5.87578652400111e-10\\
4.625	-5.72155545164321e-10\\
4.703125	-5.57821677737991e-10\\
4.78125	-5.44642664301875e-10\\
4.859375	-5.32680011211539e-10\\
4.9375	-5.21983900547696e-10\\
};
\addlegendentry{Linearly implicit}

\addplot [color=red, draw=none, mark size=6.0pt, mark=o, mark options={solid, red}]
  table[row sep=crcr]{%
0.015625	1.11022302462516e-15\\
0.09375	8.88178419700125e-16\\
0.171875	-1.22124532708767e-15\\
0.25	8.88178419700125e-16\\
0.328125	-5.55111512312578e-16\\
0.40625	-1.22124532708767e-15\\
0.484375	8.88178419700125e-16\\
0.5625	-1.22124532708767e-15\\
0.640625	1.99840144432528e-15\\
0.71875	2.66453525910038e-15\\
0.796875	-8.88178419700125e-16\\
0.875	-4.21884749357559e-15\\
0.953125	2.22044604925031e-16\\
1.03125	-1.11022302462516e-15\\
1.109375	-6.21724893790088e-15\\
1.1875	-6.66133814775094e-15\\
1.265625	-8.32667268468867e-15\\
1.34375	-1.23234755733392e-14\\
1.421875	-1.35447209004269e-14\\
1.5	-1.47659662275146e-14\\
1.578125	-1.57651669496772e-14\\
1.65625	-1.59872115546023e-14\\
1.734375	-1.66533453693773e-14\\
1.8125	-1.69864122767649e-14\\
1.890625	-1.28785870856518e-14\\
1.96875	-9.88098491916389e-15\\
2.046875	-1.50990331349021e-14\\
2.125	-2.08721928629529e-14\\
2.203125	-2.57571741713036e-14\\
2.28125	-2.27595720048157e-14\\
2.359375	-2.08721928629529e-14\\
2.4375	-1.80966353013901e-14\\
2.515625	-1.4432899320127e-14\\
2.59375	-9.54791801177635e-15\\
2.671875	-6.99440505513849e-15\\
2.75	-8.88178419700125e-15\\
2.828125	-1.02140518265514e-14\\
2.90625	-1.13242748511766e-14\\
2.984375	-1.0991207943789e-14\\
3.0625	-1.13242748511766e-14\\
3.140625	-1.11022302462516e-14\\
3.21875	-9.76996261670138e-15\\
3.296875	-1.01030295240889e-14\\
3.375	-9.54791801177635e-15\\
3.453125	-7.88258347483861e-15\\
3.53125	-4.66293670342566e-15\\
3.609375	1.77635683940025e-15\\
3.6875	-2.22044604925031e-16\\
3.765625	-7.43849426498855e-15\\
3.84375	-3.33066907387547e-15\\
3.921875	-3.33066907387547e-15\\
4	-4.32986979603811e-15\\
4.078125	-2.77555756156289e-15\\
4.15625	1.11022302462516e-15\\
4.234375	-8.88178419700125e-16\\
4.3125	2.66453525910038e-15\\
4.390625	3.99680288865056e-15\\
4.46875	4.21884749357559e-15\\
4.546875	3.10862446895044e-15\\
4.625	2.88657986402541e-15\\
4.703125	2.66453525910038e-15\\
4.78125	3.77475828372553e-15\\
4.859375	1.99840144432528e-15\\
4.9375	4.44089209850063e-15\\
};
\addlegendentry{Crank-Nicolson}

\addplot [color=black, draw=none, mark size=6.0pt, mark=triangle, mark options={solid, black}]
  table[row sep=crcr]{%
0.015625	2.22044604925031e-16\\
0.09375	4.66293670342566e-15\\
0.171875	1.06581410364015e-14\\
0.25	1.68753899743024e-14\\
0.328125	2.46469511466785e-14\\
0.40625	3.41948691584548e-14\\
0.484375	4.10782519111308e-14\\
0.5625	5.08482145278322e-14\\
0.640625	5.97299987248334e-14\\
0.71875	6.3948846218409e-14\\
0.796875	6.63913368725844e-14\\
0.875	7.32747196252603e-14\\
0.953125	7.54951656745106e-14\\
1.03125	8.10462807976364e-14\\
1.109375	8.72635297355373e-14\\
1.1875	9.48130463029884e-14\\
1.265625	1.00808250635964e-13\\
1.34375	1.06581410364015e-13\\
1.421875	1.13020703906841e-13\\
1.5	1.19015908239817e-13\\
1.578125	1.24789067967868e-13\\
1.65625	1.30118138486068e-13\\
1.734375	1.38111744263369e-13\\
1.8125	1.42330591756945e-13\\
1.890625	1.46993528460371e-13\\
1.96875	1.56541446472147e-13\\
2.046875	1.61204383175573e-13\\
2.125	1.72750702631674e-13\\
2.203125	1.79412040779425e-13\\
2.28125	1.83186799063151e-13\\
2.359375	1.92512672470002e-13\\
2.4375	1.95399252334028e-13\\
2.515625	1.98063787593128e-13\\
2.59375	2.06945571790129e-13\\
2.671875	2.15161222172355e-13\\
2.75	2.17159623616681e-13\\
2.828125	2.21378471110256e-13\\
2.90625	2.31814567541733e-13\\
2.984375	2.40474307133809e-13\\
3.0625	2.4535928844216e-13\\
3.140625	2.50910403565285e-13\\
3.21875	2.57571741713036e-13\\
3.296875	2.63566946046012e-13\\
3.375	2.69562150378988e-13\\
3.453125	2.8022029141539e-13\\
3.53125	2.8554936193359e-13\\
3.609375	2.91988655476416e-13\\
3.6875	2.88213897192691e-13\\
3.765625	2.93542967710891e-13\\
3.84375	2.98427949019242e-13\\
3.921875	2.95319324550292e-13\\
4	2.93987056920741e-13\\
4.078125	2.98427949019242e-13\\
4.15625	2.95319324550292e-13\\
4.234375	2.93987056920741e-13\\
4.3125	2.95319324550292e-13\\
4.390625	2.93765012315816e-13\\
4.46875	2.95319324550292e-13\\
4.546875	2.93542967710891e-13\\
4.625	2.94431146130592e-13\\
4.703125	2.95319324550292e-13\\
4.78125	2.95763413760142e-13\\
4.859375	2.96207502969992e-13\\
4.9375	2.97983859809392e-13\\
};
\addlegendentry{Strang splitting}

\end{axis}

\end{tikzpicture}%
} 
& 
\resizebox{0.48\textwidth}{!}{
%
%
\definecolor{mycolor1}{rgb}{0.00000,0.44700,0.74100}%

\begin{tikzpicture}

\begin{axis}[%
width=6.028in,
height=4.754in,
at={(1.011in,0.642in)},
scale only axis,
xmin=0,
xmax=5,
xlabel style={font=\color{white!15!black}},
xlabel style={font=\Large},
xlabel={time},
every x tick label/.append style={font=\Large},
ymin=-1e-08,
ymax=7e-08,
ylabel style={font=\color{white!15!black}},
ylabel style={font=\Large},
ylabel={energy variation},
every y tick label/.append style={font=\Large},
axis background/.style={fill=white},
legend style={at={(0.659,0.536)}, anchor=south west, legend cell align=left, align=left, draw=white!15!black, font=\Large}
]
\addplot [color=mycolor1, draw=none, mark size=6.0pt, mark=diamond, mark options={solid, blue}]
  table[row sep=crcr]{%
0.015625	1.11022302462516e-15\\
0.09375	6.64451826892787e-12\\
0.171875	2.64021027263084e-11\\
0.25	5.45974654375669e-11\\
0.328125	8.66878235861179e-11\\
0.40625	1.19892706873514e-10\\
0.484375	1.52696161270782e-10\\
0.5625	1.84393361690738e-10\\
0.640625	2.14643303131368e-10\\
0.71875	2.43329356663935e-10\\
0.796875	2.70422656489799e-10\\
0.875	2.95937052641193e-10\\
0.953125	3.1990810001048e-10\\
1.03125	3.42371409001174e-10\\
1.109375	3.63363561461938e-10\\
1.1875	3.82917614283329e-10\\
1.265625	4.01054134346879e-10\\
1.34375	4.17794132623328e-10\\
1.421875	4.33162339330551e-10\\
1.5	4.47184927976352e-10\\
1.578125	4.59854515577618e-10\\
1.65625	4.71204852914298e-10\\
1.734375	4.81248652040023e-10\\
1.8125	4.90011642373389e-10\\
1.890625	4.97494018203426e-10\\
1.96875	5.03715819055728e-10\\
2.046875	5.08720399139406e-10\\
2.125	5.12532932761545e-10\\
2.203125	5.15161469039072e-10\\
2.28125	5.16653747562046e-10\\
2.359375	5.1704121539764e-10\\
2.4375	5.16361092772755e-10\\
2.515625	5.14654069361242e-10\\
2.59375	5.11963305083185e-10\\
2.671875	5.08349834449362e-10\\
2.75	5.03836639076383e-10\\
2.828125	4.98496410816784e-10\\
2.90625	4.92375418215119e-10\\
2.984375	4.85540496697467e-10\\
3.0625	4.78041495277637e-10\\
3.140625	4.69940864000762e-10\\
3.21875	4.61300248000285e-10\\
3.296875	4.52183901433756e-10\\
3.375	4.42667208444547e-10\\
3.453125	4.32789609705608e-10\\
3.53125	4.22638229968797e-10\\
3.609375	4.12283346351572e-10\\
3.6875	4.01770755553699e-10\\
3.765625	3.91175147829159e-10\\
3.84375	3.80557196866249e-10\\
3.921875	3.69983321757417e-10\\
4	3.59510893277459e-10\\
4.078125	3.49202111671332e-10\\
4.15625	3.39107214530898e-10\\
4.234375	3.29286098388337e-10\\
4.3125	3.19805210091673e-10\\
4.390625	3.10696551819589e-10\\
4.46875	3.02005392915916e-10\\
4.546875	2.93785579197348e-10\\
4.625	2.86070556132501e-10\\
4.703125	2.78906425732472e-10\\
4.78125	2.72317557392654e-10\\
4.859375	2.66336758203423e-10\\
4.9375	2.60988231026715e-10\\
};
\addlegendentry{Linearly implicit}

\addplot [color=red, draw=none, mark size=6.0pt, mark=o, mark options={solid, red}]
  table[row sep=crcr]{%
0.015625	-5.74540415243519e-15\\
0.09375	-5.6621374255883e-15\\
0.171875	-5.80091530366644e-15\\
0.25	4.94049245958195e-15\\
0.328125	-9.65894031423886e-15\\
0.40625	-4.71844785465692e-15\\
0.484375	-5.30131494258512e-15\\
0.5625	-3.33066907387547e-15\\
0.640625	-2.77555756156289e-15\\
0.71875	-3.77475828372553e-15\\
0.796875	-2.77555756156289e-15\\
0.875	-1.47104550762833e-15\\
0.953125	-3.02535774210355e-15\\
1.03125	-6.38378239159465e-16\\
1.109375	2.60902410786912e-15\\
1.1875	9.71445146547012e-16\\
1.265625	-2.22044604925031e-16\\
1.34375	1.01030295240889e-14\\
1.421875	2.35922392732846e-15\\
1.5	1.27675647831893e-15\\
1.578125	5.30131494258512e-15\\
1.65625	3.38618022510673e-15\\
1.734375	3.24740234702858e-15\\
1.8125	2.77555756156289e-16\\
1.890625	4.96824803519758e-15\\
1.96875	1.01585406753202e-14\\
2.046875	5.99520433297585e-15\\
2.125	8.46545056276682e-15\\
2.203125	7.85482789922298e-15\\
2.28125	1.03528297046296e-14\\
2.359375	6.49480469405717e-15\\
2.4375	7.04991620636974e-15\\
2.515625	5.6621374255883e-15\\
2.59375	-9.15933995315754e-16\\
2.671875	7.49400541621981e-16\\
2.75	7.52176099183544e-15\\
2.828125	-3.94129173741931e-15\\
2.90625	1.13797860024079e-15\\
2.984375	-3.60822483003176e-15\\
3.0625	-4.9960036108132e-16\\
3.140625	7.49400541621981e-16\\
3.21875	-2.3037127760972e-15\\
3.296875	1.04083408558608e-14\\
3.375	5.91193760612896e-15\\
3.453125	-2.66453525910038e-15\\
3.53125	3.02535774210355e-15\\
3.609375	-2.692290834716e-15\\
3.6875	-1.47104550762833e-15\\
3.765625	9.43689570931383e-16\\
3.84375	-2.99760216648792e-15\\
3.921875	8.32667268468867e-16\\
4	-1.11022302462516e-16\\
4.078125	-3.05311331771918e-16\\
4.15625	-3.49720252756924e-15\\
4.234375	5.35682609381638e-15\\
4.3125	-1.03805852802452e-14\\
4.390625	-6.43929354282591e-15\\
4.46875	-2.3037127760972e-15\\
4.546875	-6.38378239159465e-15\\
4.625	1.11022302462516e-16\\
4.703125	-3.63598040564739e-15\\
4.78125	-2.66453525910038e-15\\
4.859375	-3.05311331771918e-16\\
4.9375	-1.27675647831893e-15\\
};
\addlegendentry{Crank-Nicolson}

\addplot [color=black, draw=none, mark size=6.0pt, mark=triangle, mark options={solid, black}]
  table[row sep=crcr]{%
0.015625	1.16135331906797e-09\\
0.09375	2.9153858105424e-08\\
0.171875	5.30317449398687e-08\\
0.25	6.07513878370103e-08\\
0.328125	6.11527858906946e-08\\
0.40625	5.95594749075445e-08\\
0.484375	5.78353676894405e-08\\
0.5625	5.64565884597634e-08\\
0.640625	5.54586110923516e-08\\
0.71875	5.47670714334814e-08\\
0.796875	5.42988005391987e-08\\
0.875	5.39867635040991e-08\\
0.953125	5.37821474300149e-08\\
1.03125	5.36507696324851e-08\\
1.109375	5.35690508896014e-08\\
1.1875	5.35207857199627e-08\\
1.265625	5.34947802999231e-08\\
1.34375	5.34832648169115e-08\\
1.421875	5.34808249907925e-08\\
1.5	5.34836133991856e-08\\
1.578125	5.34889244563352e-08\\
1.65625	5.34948207675523e-08\\
1.734375	5.34998779444518e-08\\
1.8125	5.35031277892895e-08\\
1.890625	5.35038313931313e-08\\
1.96875	5.35015070857181e-08\\
2.046875	5.34958045916856e-08\\
2.125	5.34864946222235e-08\\
2.203125	5.34734575230456e-08\\
2.28125	5.34566350907095e-08\\
2.359375	5.34360418136259e-08\\
2.4375	5.34117292616543e-08\\
2.515625	5.33837965221995e-08\\
2.59375	5.33523805967828e-08\\
2.671875	5.33176447992112e-08\\
2.75	5.32797719277056e-08\\
2.828125	5.32389824170476e-08\\
2.90625	5.31954933991052e-08\\
2.984375	5.31495482347655e-08\\
3.0625	5.310143824655e-08\\
3.140625	5.30513937502253e-08\\
3.21875	5.29997097598045e-08\\
3.296875	5.29466835930137e-08\\
3.375	5.28926057119516e-08\\
3.453125	5.28377566977323e-08\\
3.53125	5.27824549345635e-08\\
3.609375	5.27269893579874e-08\\
3.6875	5.26716544824168e-08\\
3.765625	5.2616743823064e-08\\
3.84375	5.25625497016513e-08\\
3.921875	5.2509354697694e-08\\
4	5.24574266524969e-08\\
4.078125	5.2407031936319e-08\\
4.15625	5.23584313683045e-08\\
4.234375	5.23118506012832e-08\\
4.3125	5.22675327463418e-08\\
4.390625	5.22256698165524e-08\\
4.46875	5.21864886027235e-08\\
4.546875	5.21501560546422e-08\\
4.625	5.21168337375144e-08\\
4.703125	5.20866682007792e-08\\
4.78125	5.20597895348196e-08\\
4.859375	5.20363060418916e-08\\
4.9375	5.20162965200743e-08\\
};
\addlegendentry{Strang splitting}

\end{axis}

\end{tikzpicture}%
}
\end{tabular}
\caption{Comparison of methods of order 2 for mass and
    energy conservations: On the left hand side, variation of the mass with
    respect to time; on the right hand side,
    variation of the energy with respect to time}
  \label{fig:conservations_NLS1Dsoliton}
\end{figure}

\subsubsection{Two dimensional nonlinear Schr\"odinger equation}
\label{subsubsec:Schro2d}

In this section, we consider the following 2D nonlinear Schr\"odinger equation:
\begin{equation}
\label{eq:NLS2D}
i\partial_t u=-\Delta u - |u|^2 u, 
\end{equation}
with homogeneous Dirichlet boundary conditions on the domain represented in gray in the Figure
\ref{fig:2DdomainNLS} with $l_x=l_y=1,~p_x=2$ and $p_y=3$. The initial datum we chose reads
\begin{equation}
  \label{eq:iniNLS2D}
  u_0(x,y) = \sin\left(2\pi x\right)\sin\left(2\pi y\right)\exp\left(2i\pi x\right),
\end{equation}
when $(x,y)$ belongs to the domain.
In this particular case, the spectrum of the Laplace operator is not accessible
and one cannot use efficiently spectral methods such as exponential Runge--Kutta methods
or Lawson methods \cite{Nous2017}.

\begin{figure}[!h]
\centering
\begin{tikzpicture}
\draw[black](0,0)--(2,0)--(2,3)--(0,3)--cycle;
\draw[black,fill=gray!20](0,0)--(2,0)--(2,2)--(1,2)--(1,3)--(0,3)--cycle;
\draw[black](1,0)--(1,2);
\draw[black](0,2)--(1,2);
\draw[black](0,1)--(2,1);
\draw (1/2,5/2)node[]{$1$};
\draw (1/2,3/2)node[]{$3$};
\draw (1/2,1/2)node[]{$5$};
\draw (3/2,1/2)node[]{$6$};
\draw (3/2,3/2)node[]{$4$};
\draw (3/2,5/2)node[]{$2$};
\draw[<->] (0,-0.1)--(2,-0.1);
\draw (1,-0.1)node[below]{$L_x=p_xl_x$};
\draw[<->] (2.1,2)--(2.1,3);
\draw (2.1,5/2)node[right]{$l_y$};
\draw[<->] (1,3.1)--(2,3.1);
\draw (1.5,3.1)node[above]{$l_x$};
\draw[<->] (-0.1,0)--(-0.1,3);
\draw (-0.1,3/2)node[left]{$L_y=p_yl_y$};
\end{tikzpicture}
\caption{2D domain used in the simulation of the nonlinear Schr\"odinger equation \eqref{eq:NLS2D}}
\label{fig:2DdomainNLS}
\end{figure}
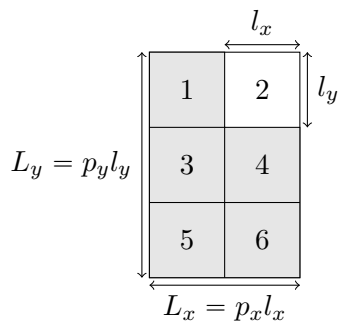

We use a finite differences discretization in space with, for $J \in \N^*$, $p_xJ+1$ points in the $x$-direction
and $p_yJ+1$ points in the $y$-direction in such a way that the step is the same in the two directions. 
This way, the numerical unknown $u_n$ at time $t_n$ is a vector of $\mathcal{N}=((p_y-1)J-1)\times (p_xJ-1)+J\times((p_x-1)J-1)$
complex numbers. No matter the way we label the unkowns,
the matrix $B$ of the Laplace operator with homogeneous Dirichlet boundary conditions is a
sparse matrix of size $\mathcal{N}\times \mathcal{N}$. For the numerical simulations we use $J=50$ which gives $\mathcal{N}=12251$ unknowns.
Moreover, we consider $T=0.5$ as a final time.
The methods we consider are
the two linearly implicit methods of order 2 introduced in Section \ref{subsec:examples} (one with Gauss points,
the other one with uniform points),
the Crank-Nicolson scheme and
the Strang splitting method.
As one has no direct access to the exact solution of \eqref{eq:NLS2D} with initial condition
\eqref{eq:iniNLS2D}, we precompute as a reference solution the numerical solution provided
by a Runge--Kutta method at Gauss points with $5$ stages (which therefore has order $10$)
with a time step of $10^{-3}$.
We initialize the linearly implicit methods with $\gamma_{-1+c_1}$ and $\gamma_{-1+c_2}$ computed using one step of a backward Crank-Nicolson scheme. 
Our numerical results are displayed in Figure \ref{fig:ordre2_NLS2D}.
As expected, the order of each method above is 2. For the two linearly implicit methods,
this again illustrates that the results of Theorem \ref{th:conv} extend numerically to this PDE case.
Note that, the constant of order is really better for the linearly implicit method with Gauss points
than all the other ones.
Moreover the CPU time required to achieve a given precision
is also smaller for the linearly implicit method with Gauss points. This is the first example where a linearly implicit method developped in this paper clearly outperforms implicit as well as explicit standard methods from the literature.


\begin{figure}[!h]
  \centering
   \begin{tabular}{cc}
\resizebox{0.48\textwidth}{!}{
%
%
\definecolor{mycolor1}{rgb}{0.00000,0.44700,0.74100}%
\definecolor{mycolor2}{rgb}{0.2,0.6,0.4}
\begin{tikzpicture}

\begin{axis}[%
width=4.521in,
height=3.566in,
at={(1.011in,0.642in)},
scale only axis,
xmin=-6.2,
xmax=-4.8,
xlabel style={font=\color{white!15!black}},
xlabel={Logarithm of the time step},
ymin=-9,
ymax=-2,
ylabel style={font=\color{white!15!black}},
ylabel={Logarithm of the final error},
axis background/.style={fill=white},
legend style={at={(0.03,0.97)}, anchor=north west, legend cell align=left, align=left, draw=white!15!black}
]
\addplot [color=blue, draw=none, mark size=6.0pt, mark=triangle, mark options={solid, rotate=180, blue}]
  table[row sep=crcr]{%
-4.80617997398389	-2.75751785759157\\
-5.10720996964787	-3.1722448450079\\
-5.40823996531185	-3.74927496376756\\
-5.70926996097583	-4.34966881849018\\
-6.01029995663981	-4.95162359069752\\
};
\addlegendentry{Crank--Nicolson}

\addplot [color=red, draw=none, mark size=6.0pt, mark=o, mark options={solid, red}]
  table[row sep=crcr]{%
-4.80617997398389	-5.88233194617328\\
-5.10720996964787	-6.64924933337887\\
-5.40823996531185	-7.29763640791663\\
-5.70926996097583	-7.91452233531015\\
-6.01029995663981	-8.52968314167835\\
};
\addlegendentry{Linearly implicit (Gauss)}

\addplot [color=mycolor2, draw=none, mark size=6.0pt, mark=diamond, mark options={solid, mycolor2}]
  table[row sep=crcr]{%
-4.80617997398389	-2.75739134557851\\
-5.10720996964787	-3.17212171815724\\
-5.40823996531185	-3.74914608465309\\
-5.70926996097583	-4.34953953389459\\
-6.01029995663981	-4.95149427939416\\
};
\addlegendentry{Strang Splitting}

\addplot [color=black, draw=none, mark size=6.0pt, mark=x, mark options={solid, black}]
  table[row sep=crcr]{%
-4.80617997398389	-2.75741350363057\\
-5.10720996964787	-3.17213233572018\\
-5.40823996531185	-3.74915417353356\\
-5.70926996097583	-4.34954726023174\\
-6.01029995663981	-4.95150195255786\\
};
\addlegendentry{Linearly implicit (uniform)}

\end{axis}
\end{tikzpicture}%
} 
& 
\resizebox{0.48\textwidth}{!}{
%
%
\definecolor{mycolor1}{rgb}{0.00000,0.44700,0.74100}%
\definecolor{mycolor2}{rgb}{0.2,0.6,0.4}
\begin{tikzpicture}

\begin{axis}[%
width=4.521in,
height=3.566in,
at={(1.011in,0.642in)},
scale only axis,
xmin=-9,
xmax=-2,
xlabel style={font=\color{white!15!black}},
xlabel={Logarithm of the final error},
ymin=0,
ymax=350000,
ylabel style={font=\color{white!15!black}},
ylabel={CPU time},
axis background/.style={fill=white},
legend style={legend cell align=left, align=left, draw=white!15!black}
]
\addplot [color=blue, mark size=6.0pt, mark=triangle, mark options={solid, rotate=180, blue}]
  table[row sep=crcr]{%
-2.75751785759157	12327.68\\
-3.1722448450079	25039.57\\
-3.74927496376756	51114.37\\
-4.34966881849018	100973.47\\
-4.95162359069752	208552.14\\
};
\addlegendentry{Crank--Nicolson}

\addplot [color=red, mark size=6.0pt, mark=o, mark options={solid, red}]
  table[row sep=crcr]{%
-5.88233194617328	17125.94\\
-6.64924933337887	34836\\
-7.29763640791663	72521.88\\
-7.91452233531015	147910.45\\
-8.52968314167835	313263.97\\
};
\addlegendentry{Linearly implicit (Gaus)}

\addplot [color=mycolor2, mark size=6.0pt, mark=diamond, mark options={solid, mycolor2}]
  table[row sep=crcr]{%
-2.75739134557851	4043.31\\
-3.17212171815724	8074.12\\
-3.74914608465309	16085.43\\
-4.34953953389459	33067.67\\
-4.95149427939416	68207.49\\
};
\addlegendentry{Strang Splitting}

\addplot [color=black, mark size=6.0pt, mark=x, mark options={solid, black}]
  table[row sep=crcr]{%
-2.75741350363057	5662.92\\
-3.17213233572018	11156.09\\
-3.74915417353356	22170.14\\
-4.34954726023174	45198.5\\
-4.95150195255786	91970.22\\
};
\addlegendentry{Linearly implicit (uniform)}

\end{axis}
\end{tikzpicture}%
}
  \end{tabular}
  \caption{Comparison of methods of order 2 applied to \eqref{eq:NLS2D}: On the left hand side, maximal numerical error as a function of the time step (logarithmic scales); on the right hand side, CPU time (in seconds) as a function of the maximal numerical error. } 
   \label{fig:ordre2_NLS2D}
\end{figure}

\begin{remark}[Preservation of mass in NLS equations by linearly implicit methods]
  \label{rem:masspres}
  If we consider a linearly implicit method defined by a Runge--Kutta collocation method of order $s$
  satisfying the Cooper condition
  \begin{equation}
    \label{eq:Cooper}
    \forall (i,j)\in\{1,\cdots,s\}^2,\qquad b_i b_j = b_i a_{i,j} + b_j a_{j,i},
  \end{equation}
  that is to say a Runge--Kutta collocation method at Gauss' points, then this linearly
  implicit method preserves the mass ({\it i.e.} the squared $L^2$-norm) for the NLS equation,
  regardless of the physical dimension of the problem.
  This property does not depend on the choice of the matrix $D$ and vector $(\theta_1,\cdots,\theta_s)$
  in step \eqref{eq:heritagegamma} as long as they are real-valued.
  This preservation property relies on the fact that,
  provided the initial $(\gamma_{-1+c_i})_{1\leq i\leq s}$
  are purely imaginary, so will be all the $(\gamma_{n+c_i})_{1\leq i\leq s}$.
  This property will be detailed further in a forthcoming paper dealing with the time integration
  of PDEs using linearly implicit methods.
  For example, the linearly implicit methods of order $1$, $4$ and $6$ from Section
  \ref{subsec:examples} do not satisfy the Cooper condition \eqref{eq:Cooper},
  hence they do not preserve the mass for the NLS equation.
  In contrast, the first method of order $2$ of Section \ref{subsec:examples},
  which uses Gauss' points, preserves the mass of the solution of the NLS equation
  (and the second method of order $2$ does not).
\end{remark}

\subsection{Application to the nonlinear heat equation}
\label{subsec:heat}

In the previous sections, we have proved and illustrated that the linearly implicit methods developed
in this paper have good quantitative properties.
The goal of this section is to illustrate that they can indeed also have good qualitative properties.
Indeed, on some nonlinear heat equation with gradient-flow structure, we give an example
below of a linearly implicit fully discrete scheme which preserves the nonnegativity of the
solution (just as the exact flow does) as well as the decay of a discrete energy
which is consistent with the continuous energy of the problem.

Let us consider the one dimensional nonlinear heat equation given by
\begin{equation}
\label{eq:NLH}
\partial_t u = \partial_x^2 u +  u^3,
\end{equation}
with homogeneous Dirichlet boundary conditions on $\Omega=(-50,50)$.
This corresponds to equation \eqref{eq:PDE} with $L=\partial_x^2$ and $N(u)=u^2$.
Equation \eqref{eq:NLH} is the $L^2$-gradient flow equation for the energy:
\begin{equation}
\label{eq:NRJNLH}
E(u)=\dfrac{1}{2} \int_\Omega (\partial_x u)^2 \dd  x -\dfrac{1}{4} \int_\Omega u^4 \dd x,
\end{equation}
defined for $u \in H^1_0(\Omega)$.
It is well-known in the literature (see \cite{Hayakawa73} for example) that

\begin{prop}
\label{prop:NLH}
For all $u_0 \in H^1_0(\Omega),~u_0 \neq 0$,
the equation \eqref{eq:NLH} has a unique maximal solution $u$ in
$\mathcal{C}^0([0,T_*),H^1_0(\Omega)) \cap \mathcal{C}^1((0,T_*),L^2(\Omega))$ with $u(0)=u_0$
for some $T_* >0$.
Moreover this solution $u$ satisfies 
\begin{equation}
\label{eq:DecNRJ_NLH}
\forall ~t \in (0,T_*), \qquad  \dfrac{\dd E(u(t))}{\dd t} \leq 0.
\end{equation}
Finally if $u_0 \geq 0$ on $\Omega$ then for all $t \in [0,T_*)$, $u(t) \geq 0$ on $\Omega$.
\end{prop}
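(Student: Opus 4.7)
The statement collects three classical facts about the semilinear heat equation~\eqref{eq:NLH} on a bounded interval with homogeneous Dirichlet boundary conditions, namely local well-posedness in $H^1_0(\Omega)$, dissipation of the energy $E$, and propagation of non-negativity. I would treat them in turn, using the heat semigroup approach together with the 1D Sobolev embedding $H^1_0(\Omega)\hookrightarrow L^\infty(\Omega)$, which makes the superlinear reaction $u\mapsto u^3$ tractable.

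For local well-posedness I would rewrite the problem in Duhamel form
\begin{equation*}
u(t)=e^{t\Delta_D}u_0+\int_0^t e^{(t-s)\Delta_D}\,u(s)^3\,\dd s,
\end{equation*}
where $(e^{t\Delta_D})_{t\geq 0}$ is the Dirichlet heat semigroup on $\Omega=(-50,50)$. This semigroup is analytic on $L^2(\Omega)$ and preserves $H^1_0(\Omega)$. Because $H^1_0(\Omega)\hookrightarrow L^\infty(\Omega)$ in dimension one, the Nemytskii map $u\mapsto u^3$ is locally Lipschitz on $H^1_0(\Omega)$, and a Banach fixed-point argument in $\mathcal C^0([0,T],H^1_0(\Omega))$ for sufficiently small $T>0$ depending on $\|u_0\|_{H^1}$ yields a unique mild solution. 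A standard blow-up alternative then extends it to a maximal interval $[0,T_*)$. The analyticity of the semigroup together with $u^3\in L^\infty_{\rm loc}([0,T_*),L^2(\Omega))$ provides parabolic smoothing, so that $u(t)\in H^2(\Omega)\cap H^1_0(\Omega)$ and $\partial_t u(t)\in L^2(\Omega)$ for every $t\in(0,T_*)$, which gives the claimed $\mathcal C^1((0,T_*),L^2(\Omega))$ regularity.

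For the energy decay~\eqref{eq:DecNRJ_NLH}, once that regularity is in hand I would differentiate $E(u(t))$ along the flow, use the chain rule componentwise and integrate by parts (the boundary contribution vanishes thanks to the Dirichlet condition):
\begin{equation*}
\frac{\dd E(u(t))}{\dd t}=\int_\Omega \partial_x u\,\partial_x(\partial_t u)\,\dd x-\int_\Omega u^3\partial_t u\,\dd x=-\int_\Omega\bigl(\partial_x^2 u+u^3\bigr)\partial_t u\,\dd x=-\|\partial_t u(t)\|_{L^2}^2\leq 0.
\end{equation*}

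For the propagation of non-negativity I would use a Kato/truncation argument. Setting $u^-=\max(-u,0)\in H^1_0(\Omega)$, multiplying~\eqref{eq:NLH} by $-u^-$, integrating over $\Omega$, and using the identities $\partial_x u^-=-\partial_x u\cdot\mathbf 1_{\{u<0\}}$ and $u^3 u^-=-(u^-)^4$, one obtains
\begin{equation*}
\tfrac12\frac{\dd}{\dd t}\|u^-(t)\|_{L^2}^2=-\int_\Omega(\partial_x u^-)^2\,\dd x+\int_\Omega (u^-)^4\,\dd x\leq \|u(t)\|_{L^\infty}^2\,\|u^-(t)\|_{L^2}^2.
\end{equation*}
Since $u\in\mathcal C^0([0,T_*),H^1_0(\Omega))\hookrightarrow \mathcal C^0([0,T_*),L^\infty(\Omega))$ the prefactor is locally bounded in $t$, so Gr\"onwall's lemma applied with $\|u^-(0)\|_{L^2}^2=0$ forces $u^-\equiv 0$ on $[0,T_*)$, which is the claim.

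\textbf{Main obstacle.} The technical heart lies in justifying the two differential identities above with the regularity actually available: parabolic smoothing gives $u\in H^2$ and $\partial_t u\in L^2$ for $t>0$, which is exactly what is needed to make both the energy computation and the truncation computation rigorous (the chain rule for $u^-$ requires a standard regularization argument of Stampacchia type). The superlinear term $u^3$ is what might, a priori, obstruct both global existence and the sign estimate; the 1D embedding $H^1_0\hookrightarrow L^\infty$ is what makes it manageable on any sub-interval of $[0,T_*)$ via Gr\"onwall.
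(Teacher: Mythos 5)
Your sketch is correct, but note that the paper does not actually prove Proposition~\ref{prop:NLH}: it states the result as ``well-known in the literature'' and cites \cite{Hayakawa73}, so there is no in-paper argument to compare against. What you propose is the standard self-contained route, and all three pieces check out. The fixed-point argument in $\mathcal C^0([0,T],H^1_0(\Omega))$ works precisely because the one-dimensional embedding $H^1_0(\Omega)\hookrightarrow L^\infty(\Omega)$ makes $u\mapsto u^3$ locally Lipschitz on $H^1_0$, and parabolic smoothing supplies the $H^2$ and $\mathcal C^1((0,T_*),L^2)$ regularity needed to legitimize the two differential identities. The energy computation correctly yields $\frac{\dd}{\dd t}E(u(t))=-\|\partial_t u(t)\|_{L^2}^2$, which is the $L^2$-gradient-flow structure the paper invokes in \eqref{eq:NRJNLH}--\eqref{eq:DecNRJ_NLH}. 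The truncation computation is also right: with $u^-=\max(-u,0)$ one has $\tfrac12\frac{\dd}{\dd t}\|u^-\|_{L^2}^2=-\|\partial_x u^-\|_{L^2}^2+\|u^-\|_{L^4}^4\leq\|u\|_{L^\infty}^2\|u^-\|_{L^2}^2$, and Gr\"onwall from $u^-(0)=0$ gives the sign preservation. The only points requiring care are the ones you already flag (time-differentiability of $t\mapsto\|u^-(t)\|_{L^2}^2$ and the chain rule for the energy), both handled by standard regularization once the smoothing estimates are in place.
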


In order to give an example with good qualitative properties,
we consider a modified version of the fully discrete one stage method presented in Section \ref{subsec:examples} with $s=1$, $c_1=1/2$ so that $a_{1,1}=1/2$ and $b_1=1/2$, and with $\lambda_1=1/2$ so that $y_1=1/2$ and $\theta_1=1/2$.

Let us denote by $\mathcal{N}$ the number of unknowns, so that $\delta x = 100/(\mathcal{N}+1)$. We denote by
$\langle\cdot,\cdot\rangle$ the scalar product on $\R^\mathcal{N}$ defined for $v,w\in\R^\mathcal{N}$ by
$\langle v,w\rangle=\delta x\sum_{k=1}^\mathcal{N} v(k)w(k)$ and by $\| \cdot \|_2$ the associated norm. Moreover, for all $v\in\R^\mathcal{N}$, we denote by $v^{\circ 2}$ the vector of $\R^\mathcal{N}$ with component $k$
equal to $v^{\circ 2}(k)=v(k)^2$.

Then, the stage \eqref{eq:heritagegamma} reads here
\begin{equation}
\label{eq:heritagegammaNLH}
\gamma_{n+1/2} = \dfrac{1}{2} \gamma_{n-1/2} + \dfrac{1}{2} u_n^{\circ 2},
\end{equation}
and the stages \eqref{eq:RKimpl}, and \eqref{eq:RKexpl} can be summarized by
\begin{equation}
\label{eq:schemeNLH}
\dfrac{u_{n+1}-u_n}{h}=\left(B +{\rm diag}(\gamma_{n+1/2})\right) \dfrac{u_{n+1}+u_n}{2},
\end{equation}
where $B$ denotes the matrix of the Laplacian operator with homogeneous Dirichlet boundary conditions
on $\Omega$ on the equispaced grid, with space step size $\delta x$,
as defined after \eqref{eq:approxexp}. We still denote by $u_0$ the evaluation of the initial datum $u_0$ on the equispaced grid.
In addition, we choose for $\gamma_{-1/2}$ the evaluation of $N(u_0)$ on the same grid. 

The fully discrete energy associated to the numerical scheme is defined for $u,\gamma\in\R^\mathcal{N}$ by
\begin{equation}
\label{eq:discNRJNLH}
E_{rlx}(u,\gamma)=-\dfrac12 \langle u,B u\rangle - \dfrac{1}{2} \langle\gamma,u^{\circ 2}\rangle +
\dfrac{1}{4} \langle\gamma,\gamma\rangle.
\end{equation}
Note that this formula is consistent with the continuous energy $E$ defined in \eqref{eq:NRJNLH}. 

\begin{prop}
\label{prop:discNLH}
Let us assume $u_0 \in H^1_0(\Omega)$. Still denote by $u_0$ the projection of $u_0$ onto
the equispaced grid with $\mathcal{N}$ interior points. Choose $T \in (0,T_*)$.

1- Let us assume that there exists $h_0,\delta x_0>0$ such that for all $h \in (0,h_0)$
and all $\delta x\in(0,\delta x_0)$ with $h<\delta x^2$,
the sequence $(\gamma_{n+1/2})_{n\geq 0}$ is bounded in $\R^\mathcal{N}$ with the maximum norm
as long as $(n+1/2)h\leq T$.
Then, for all $h \in (0,h_0)$, $\delta x \in(0,\delta x_0)$ and $n$ such that $(n+1/2)h\leq T$
and $h/\delta x^2<1$, $\gamma_{n+1/2}$ is a nonnegative real-valued vector. 
Moreover assuming $u_0 \geq 0$, there exists a constant $h_1 \in (0,h_0)$ such that for all
$h \leq h_1$ and $\delta x\in(0,\delta x_0)$ with $h/\delta x^2<1$,
the sequence $(u_n)_{n \geq 0}$ is a sequence of nonnegative vectors as long as $nh\leq T$. 

2- For all $h \in (0,h_0)$ and for all $n\in\N$ such that $(n+1)h\leq T$,
the sequence $(u_n,\gamma_{n-1/2})_{n \geq 0}$ satisfies
\begin{equation}
\label{eq:DecNRJdiscrNLH}
E_{rlx}(u_{n+1},\gamma_{n+1/2}) \leq  E_{rlx}(u_n,\gamma_{n-1/2}).
\end{equation}
\end{prop}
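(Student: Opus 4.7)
The plan is to establish the two parts of the proposition in sequence, with the first part split into nonnegativity of the $\gamma_{n+1/2}$'s and then of the $u_n$'s.

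I would start with the nonnegativity of $(\gamma_{n+1/2})$, which is almost immediate: $\gamma_{-1/2} = u_0^{\circ 2} \geq 0$ componentwise, and whenever $u_n \in \R^\mathcal{N}$ is well defined one has $u_n^{\circ 2} \geq 0$, so the recursion \eqref{eq:heritagegammaNLH} preserves componentwise nonnegativity by induction. The role of the uniform $\ell^\infty$ bound in the hypothesis is just to guarantee that the linear system \eqref{eq:schemeNLH} is actually invertible at every step, so that $u_n$ is well defined up to time $T$. For nonnegativity of $(u_n)$, I would rewrite \eqref{eq:schemeNLH} as $A_n u_{n+1} = C_n u_n$ with $A_n = I - \tfrac{h}{2}(B + \mathrm{diag}(\gamma_{n+1/2}))$ and $C_n = I + \tfrac{h}{2}(B + \mathrm{diag}(\gamma_{n+1/2}))$. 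The condition $h < \delta x^2$ forces $I + \tfrac{h}{2} B$ to have only nonnegative entries, and adding the nonnegative diagonal $\tfrac{h}{2}\mathrm{diag}(\gamma_{n+1/2})$ preserves this, so $C_n$ has nonnegative entries. Meanwhile $A_n$ has positive diagonal and nonpositive off-diagonal entries, and becomes strictly diagonally dominant as soon as $h < 2/\sup_n \|\gamma_{n+1/2}\|_\infty$, which can be arranged by shrinking $h_0$ to some $h_1$ thanks to the boundedness hypothesis. Hence $A_n$ is an $M$-matrix, $A_n^{-1}$ has nonnegative entries, and $u_n \geq 0$ implies $u_{n+1} = A_n^{-1} C_n u_n \geq 0$; induction from $u_0 \geq 0$ closes the argument.

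For the energy decay, the key idea is the telescoping
\[ E_{rlx}(u_{n+1},\gamma_{n+1/2}) - E_{rlx}(u_n,\gamma_{n-1/2}) = \Delta_1 + \Delta_2, \]
with $\Delta_1 = E_{rlx}(u_{n+1},\gamma_{n+1/2}) - E_{rlx}(u_n,\gamma_{n+1/2})$ and $\Delta_2 = E_{rlx}(u_n,\gamma_{n+1/2}) - E_{rlx}(u_n,\gamma_{n-1/2})$, together with the claim that each term is non-positive. For $\Delta_1$, I would use the symmetric-matrix identity $\langle a, Ba\rangle - \langle b, Bb\rangle = \langle a-b, B(a+b)\rangle$ together with the pointwise factorisation $u_{n+1}^{\circ 2} - u_n^{\circ 2} = (u_{n+1}-u_n)\circ(u_{n+1}+u_n)$ to rewrite $\Delta_1$ as $-\langle u_{n+1}-u_n,\, (B + \mathrm{diag}(\gamma_{n+1/2})) \bar{u} \rangle$ with $\bar{u} = (u_n+u_{n+1})/2$; the scheme \eqref{eq:schemeNLH} then identifies this with $-h^{-1}\|u_{n+1}-u_n\|_2^2 \leq 0$. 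For $\Delta_2$ the Laplacian term drops out, and rewriting the recursion as $u_n^{\circ 2} = 2\gamma_{n+1/2} - \gamma_{n-1/2}$ lets one combine the remaining cross terms into the perfect square $-\tfrac{3}{4}\|\gamma_{n+1/2} - \gamma_{n-1/2}\|_2^2 \leq 0$.

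The hard part, I expect, is not the nonnegativity argument, which reduces to standard $M$-matrix theory once $h$ is suitably constrained, but rather the $\Delta_2$ computation: the non-positivity of $\Delta_2$ hinges on the specific coefficients $(1/2,1/2)$ in the recursion \eqref{eq:heritagegammaNLH}, themselves inherited from the choice $\lambda_1 = 1/2$ in the construction of the one-stage linearly implicit method. A different eigenvalue for $D$ would generally destroy the perfect-square structure of $\Delta_2$, so the qualitative energy-decay property of the proposition is tightly linked to this particular design of the scheme.
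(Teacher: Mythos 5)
Your proposal is correct and follows essentially the same route as the paper: the same convex-combination argument for the sign of $\gamma_{n+1/2}$, the same $M$-matrix (nonpositive off-diagonal entries plus dominance for $h$ small, hence nonnegative inverse) argument for the sign of $u_n$, and the same energy identity $\frac{1}{h}\|u_{n+1}-u_n\|_2^2+\frac{3}{4}\|\gamma_{n+1/2}-\gamma_{n-1/2}\|_2^2=E_{rlx}(u_n,\gamma_{n-1/2})-E_{rlx}(u_{n+1},\gamma_{n+1/2})$, merely reorganized as a split into $\Delta_1+\Delta_2$ where the paper instead tests the scheme against $u_{n+1}-u_n$ and rearranges. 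Your closing remark that the perfect-square structure of $\Delta_2$ depends on the coefficients $(1/2,1/2)$, i.e.\ on the choice $\lambda_1=1/2$, is a pertinent observation consistent with the paper's surrounding discussion.
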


\begin{proof}
  We will use M, P and Z matrices as defined for example in the Chapter 10 of \cite{BermanPlemmons87}.
Let us give the main ideas of the proof of proposition \ref{prop:discNLH}. 

1- The sign of $\gamma_{n+1/2}$ is a direct consequence of \eqref{eq:heritagegammaNLH} and the choice of the initial condition $\gamma_{-1/2}=N(u_0)$
: it is a convex combination of vectors with same signs. 
Moreover assuming $u_0 \geq 0$, the nonnegativity of $u_n$ can be obtained by induction using the following arguments. The equation \eqref{eq:schemeNLH} can be written 
\begin{equation}
  \label{eq:NLHinterm}
\left(1-\dfrac{h}{2}B -\dfrac{h}{2} {\rm diag}(\gamma_{n+1/2})\right)u_{n+1}=\left(1+\dfrac{h}{2}B +\dfrac{h}{2} {\rm diag}(\gamma_{n+1/2})\right)u_n.
\end{equation}
Since $h/\delta x^2<1$ and $u_n\geq 0$, one has $\left(1+\dfrac{h}{2}B\right)u_n\geq 0$.
Since $u_n\geq 0$ and $\gamma_{n+1/2}\geq 0$, one has that ${\rm diag}(\gamma_{n+1/2})u_n\geq 0$,
so that the right-hand side of \eqref{eq:NLHinterm} is nonnegative componentwise.
Moreover, the operator in the left-hand side of \eqref{eq:NLHinterm} has nonnegative inverse
since it is an M-matrix for $h_1\in(0,h_0)$ small enough (depending on the bound on the
maximum norm of the sequence $(\gamma_{n+1/2})_{n\geq 0}$). Indeed, one can check
that it is a Z-matrix since its off-diagonal coefficients are nonpositive, and it is also
a P-matrix (for $h\in(0,h_1)$).

2- Taking the scalar product of \eqref{eq:schemeNLH} with $u_{n+1}-u_n$
we obtain
\begin{equation*}
  \dfrac{1}{h}  \| u_{n+1}-u_n \|_2^2
  = \dfrac{1}{2} \langle u_{n+1},Bu_{{n+1}}\rangle
  - \dfrac{1}{2} \langle u_n,Bu_n\rangle
  + \dfrac{1}{2} \langle \gamma_{n+1/2},u_{n+1}^{\circ 2}-u_n^{\circ 2}\rangle,
\end{equation*}
which gives
\begin{eqnarray*}
    \dfrac{1}{h}  \| u_{n+1}-u_n \|_2^2
&=& -E_{rlx}(u_{n+1},\gamma_{n+1/2})+ E_{rlx}(u_n,\gamma_{n-1/2}) \\
&+ & \langle \gamma_{n+1/2},\dfrac{1}{4}\gamma_{n+1/2}-\dfrac{1}{2}u_n^{\circ 2}\rangle + \langle \gamma_{n-1/2}, -\dfrac{1}{4}\gamma_{n-1/2}+\dfrac{1}{2}u_n^{\circ 2}\rangle. 
\end{eqnarray*}
Then, using \eqref{eq:heritagegammaNLH}, a straightforward computation leads to 
\begin{equation}
\label{eq:idNRJdiscrNLH}
\dfrac{1}{h}  \| u_{n+1}-u_n \|_2^2 + \dfrac{3}{4} \| \gamma_{n+1/2}-\gamma_{n-1/2}\|_2^2= -E_{rlx}(u_{n+1},\gamma_{n+1/2})+ E_{rlx}(u_n,\gamma_{n-1/2}),
\end{equation}
which implies the result \eqref{eq:DecNRJdiscrNLH}.
\end{proof}

We display in Figure \ref{fig:ordre1_chaleur} the comparison of the method above
with the Lie splitting method,
with the linear part approximated by a formula similar to \eqref{eq:approxexp}, and with the implicit
Euler method. We compute the $L^2$ numerical errors using a reference solution obtained
by a standard method of order 10 with a very small time step.
Unsurprisingly, the three methods are of order 1 numerically. Moreover, the linearly implicit method
is faster than the implicit Euler method for a given error, but slower than the Lie splitting method.
Note that all the methods preserve the nonnegativity of the solution (as long as one has
a bound on $\|u_n\|_2$ and $h$ is sufficiently small with respect to this bound, and under
an additional CFL condition for the Lie splitting method).

\begin{figure}[!h]
  \centering
   \begin{tabular}{cc}
\resizebox{0.48\textwidth}{!}{
%
%
\definecolor{mycolor2}{rgb}{0.2,0.6,0.4}

\begin{tikzpicture}

\begin{axis}[%
width=4.521in,
height=3.566in,
at={(1.011in,0.642in)},
scale only axis,
xmin=-5.4,
xmax=-3.4,
xlabel style={font=\color{white!15!black}},
xlabel={Logarithm of the time step},
ymin=-8,
ymax=-3,
ylabel style={font=\color{white!15!black}},
ylabel={Logarithm of the final error},
axis background/.style={fill=white},
legend style={at={(0.03,0.97)}, anchor=north west, legend cell align=left, align=left, draw=white!15!black}
]
\addplot [color=red, draw=none, mark size=6.0pt, mark=o, mark options={solid, red}]
  table[row sep=crcr]{%
-3.40823996531185	-3.20611759524951\\
-3.70926996097583	-3.50673882113964\\
-4.01029995663981	-3.80756424217899\\
-4.31132995230379	-4.10849190292709\\
-4.61235994796777	-4.40947071938905\\
-4.91338994363176	-4.71047512204763\\
-5.21441993929574	-5.01149232067857\\
};
\addlegendentry{Linearly implicit}

\addplot [color=blue, draw=none, mark size=6.0pt, mark=triangle, mark options={solid, rotate=180, blue}]
  table[row sep=crcr]{%
-3.40823996531185	-3.50741693386784\\
-3.70926996097583	-3.80855477333897\\
-4.01029995663981	-4.10963871138263\\
-4.31132995230379	-4.41069547985917\\
-4.61235994796777	-4.71173878126624\\
-4.91338994363176	-5.01278004748716\\
-5.21441993929574	-5.31380935531079\\
};
\addlegendentry{Implicit Euler}

\addplot [color=mycolor2, draw=none, mark size=6.0pt, mark=diamond, mark options={solid, mycolor2}]
  table[row sep=crcr]{%
-3.40823996531185	-6.14920662287822\\
-3.70926996097583	-6.4502220163738\\
-4.01029995663981	-6.75126157480311\\
-4.31132995230379	-7.0523265723831\\
-4.61235994796777	-7.35315247898513\\
-4.91338994363176	-7.65446091046005\\
-5.21441993929574	-7.95421701509056\\
};
\addlegendentry{Lie Splitting}

\end{axis}
\end{tikzpicture}%
} 
& 
\resizebox{0.48\textwidth}{!}{
%
%
\definecolor{mycolor2}{rgb}{0.2,0.6,0.4}

\begin{tikzpicture}

\begin{axis}[%
width=4.521in,
height=3.566in,
at={(1.011in,0.642in)},
scale only axis,
xmin=-8,
xmax=-3,
xlabel style={font=\color{white!15!black}},
xlabel={Logarithm of the final error},
ymin=-1,
ymax=2,
ylabel style={font=\color{white!15!black}},
ylabel={CPU time},
axis background/.style={fill=white},
legend style={at={(0.03,0.97)}, anchor=north west, legend cell align=left, align=left, draw=white!15!black}
]
\addplot [color=red, mark size=6.0pt, mark=o, mark options={solid, red}]
  table[row sep=crcr]{%
-3.20611759524951	-0.16115090926271\\
-3.50673882113964	-0.267606240177061\\
-3.80756424217899	0.041392685158234\\
-4.10849190292709	0.338456493604617\\
-4.40947071938905	0.63848925695464\\
-4.71047512204763	0.941014243705571\\
-5.01149232067857	1.24229290498293\\
};
\addlegendentry{Linearly implicit}

\addplot [color=blue, mark size=6.0pt, mark=triangle, mark options={solid, rotate=180, blue}]
  table[row sep=crcr]{%
-3.50741693386784	0.292256071356459\\
-3.80855477333897	0.579783596616818\\
-4.10963871138263	0.861534410859037\\
-4.41069547985917	1.16405529189345\\
-4.71173878126624	1.464191370641\\
-5.01278004748716	1.71667097556014\\
-5.31380935531079	1.88840416773705\\
};
\addlegendentry{Implicit Euler}

\addplot [color=mycolor2, mark size=6.0pt, mark=diamond, mark options={solid, mycolor2}]
  table[row sep=crcr]{%
-6.14920662287822	-0.638272163982373\\
-6.4502220163738	-0.397940008672062\\
-6.75126157480311	-0.102372908709579\\
-7.0523265723831	0.2041199826559\\
-7.35315247898513	0.501059262217746\\
-7.65446091046005	0.806179973983886\\
-7.95421701509056	1.10653085382238\\
};
\addlegendentry{Lie Splitting}

\end{axis}
\end{tikzpicture}%
}
  \end{tabular}
  \caption{Comparison of methods of order 1 applied to \eqref{eq:NLH}: On the left hand side, maximal numerical error as a function of the time step (logarithmic scales); on the right hand side, CPU time (in seconds) as a function of the maximal numerical error. } 
   \label{fig:ordre1_chaleur}
\end{figure}

We display in Figure \ref{fig:NRJdecay_chaleur} the plots of the initial datum and the final time
solution obtained at $T=1$ with the same linearly implicit method of order 1
(for $h=1/(5\times2^{11})$) (left hand side) and the plot of the evolution of $E_{rlx}$ (right hand side).
This illustrates the results of Proposition \ref{prop:discNLH} : the numerical solution
starting from a nonnegative initial datum stays nonnegative, and the discrete energy does
not increase with time (see \eqref{eq:DecNRJdiscrNLH}).

\begin{figure}[!h]
  \centering
   \begin{tabular}{cc}
\resizebox{0.48\textwidth}{!}{
%
%
\begin{tikzpicture}

\begin{axis}[%
width=4.521in,
height=3.566in,
at={(1.011in,0.642in)},
scale only axis,
xmin=-50,
xmax=50,
ymin=0,
ymax=0.8,
axis background/.style={fill=white},
legend style={legend cell align=left, align=left, draw=white!15!black}
]
\addplot [color=red, mark=o, mark size=3.0pt, mark options={solid, red}]
  table[row sep=crcr]{%
-49.9024390243902	0.00153248182189678\\
-47.9512195121951	0.0321599527061849\\
-46	0.0626666167821522\\
-44.0487804878049	0.0929378777146235\\
-42.0975609756098	0.122860023445298\\
-40.1463414634146	0.152320653344831\\
-38.1951219512195	0.181209100438624\\
-36.2439024390244	0.20941684712024\\
-34.2926829268293	0.236837932790868\\
-32.3414634146341	0.263369351893539\\
-30.390243902439	0.288911440846927\\
-28.4390243902439	0.313368252425222\\
-26.4878048780488	0.33664791617776\\
-24.5365853658537	0.3586629835345\\
-22.5853658536585	0.379330756300994\\
-20.6341463414634	0.398573597308872\\
-18.6829268292683	0.416319222054907\\
-16.7317073170732	0.432500970233133\\
-14.7804878048781	0.44705805614003\\
-12.8292682926829	0.459935797012132\\
-10.8780487804878	0.47108581843834\\
-8.92682926829269	0.480466236075306\\
-6.97560975609756	0.488041812983293\\
-5.02439024390244	0.493784091991491\\
-3.07317073170732	0.497671502595557\\
-1.1219512195122	0.499689441985842\\
0.829268292682928	0.499830329901911\\
2.78048780487805	0.498093637107304\\
4.73170731707317	0.494485887377584\\
6.68292682926829	0.489020632994188\\
8.63414634146341	0.481718403836147\\
10.5853658536585	0.472606630260909\\
12.5365853658537	0.46171954006395\\
14.4878048780488	0.449098029904248\\
16.4390243902439	0.434789511678598\\
18.390243902439	0.418847734421844\\
20.3414634146341	0.401332582402068\\
22.2926829268293	0.38230985016916\\
24.2439024390244	0.361850995401791\\
26.1951219512195	0.340032870481212\\
28.1463414634146	0.316937433800188\\
30.0975609756098	0.29265144189153\\
32.0487804878049	0.267266123532726\\
34	0.240876837050858\\
35.9512195121951	0.213582712115141\\
37.9024390243902	0.185486277362651\\
39.8536585365854	0.156693075256048\\
41.8048780487805	0.127311265620057\\
43.7560975609756	0.0974512193459911\\
45.7073170731707	0.067225103790552\\
47.6585365853659	0.0367464614263153\\
49.609756097561	0.00612978332668585\\
};
\addlegendentry{Initial datum}

\addplot [color=blue, mark=diamond, mark size=3.0pt, mark options={solid, blue}]
  table[row sep=crcr]{%
-49.9024390243902	0.00153210499632899\\
-47.9512195121951	0.0321851921736069\\
-46	0.0628979385886272\\
-44.0487804878049	0.0937258473261866\\
-42.0975609756098	0.124721767789927\\
-40.1463414634146	0.155934333441097\\
-38.1951219512195	0.187406073383418\\
-36.2439024390244	0.219171074167908\\
-34.2926829268293	0.25125205762889\\
-32.3414634146341	0.283656732972568\\
-30.390243902439	0.316373282349563\\
-28.4390243902439	0.349364857492989\\
-26.4878048780488	0.382563013500171\\
-24.5365853658537	0.415860101940413\\
-22.5853658536585	0.449100810719143\\
-20.6341463414634	0.482073294978443\\
-18.6829268292683	0.514500707408546\\
-16.7317073170732	0.54603440203672\\
-14.7804878048781	0.576250603945866\\
-12.8292682926829	0.60465279061859\\
-10.8780487804878	0.630682215153438\\
-8.92682926829269	0.653738644335408\\
-6.97560975609756	0.673212221410433\\
-5.02439024390244	0.688525293621636\\
-3.07317073170732	0.699180320248777\\
-1.1219512195122	0.704807311116746\\
0.829268292682928	0.705202683691343\\
2.78048780487805	0.700351916409026\\
4.73170731707317	0.690431198651352\\
6.68292682926829	0.675787718201368\\
8.63414634146341	0.656902792484925\\
10.5853658536585	0.634345185451254\\
12.5365853658537	0.608722796331441\\
14.4878048780488	0.580639601982165\\
16.4390243902439	0.550662163889866\\
18.390243902439	0.519297238842822\\
20.3414634146341	0.486979837387712\\
22.2926829268293	0.454069774953913\\
24.2439024390244	0.420854297311693\\
26.1951219512195	0.387554483759931\\
28.1463414634146	0.354333559474671\\
30.0975609756098	0.321305766574183\\
32.0487804878049	0.288544921566731\\
34	0.256092167316507\\
35.9512195121951	0.223962699921648\\
37.9024390243902	0.192151428505724\\
39.8536585365854	0.16063763095172\\
41.8048780487805	0.129388723017383\\
43.7560975609756	0.0983632802161677\\
45.7073170731707	0.0675134548127188\\
47.6585365853659	0.0367869235582641\\
49.609756097561	0.00612849141141468\\
};
\addlegendentry{Final time solution}

\end{axis}
\end{tikzpicture}%
} 
& 
\resizebox{0.48\textwidth}{!}{
\input{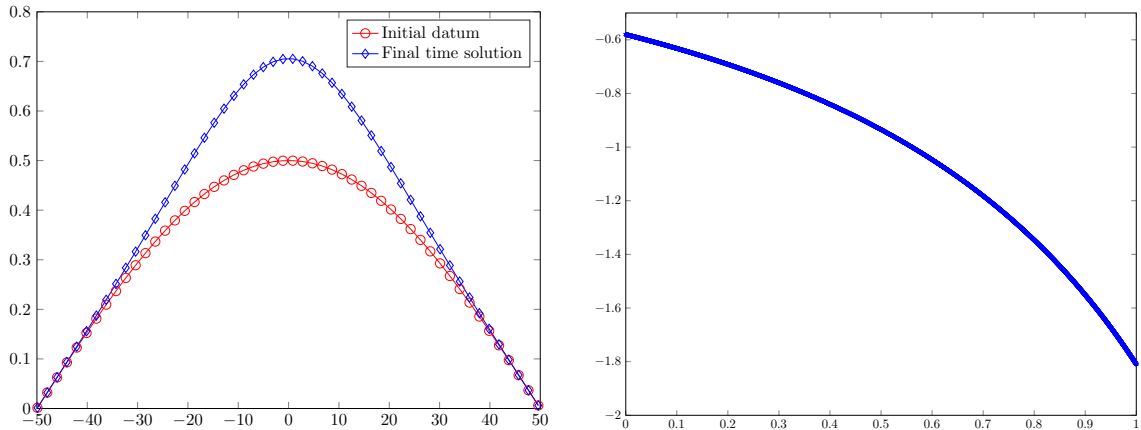}
}
  \end{tabular}
  \caption{Initial datum and solution at the final time $T=1$ with respect to space (left hand side) and evolution of $E_{rlx}$ with respect to time (right hand side).} 
   \label{fig:NRJdecay_chaleur}
\end{figure}

\section{Conclusion and perspectives}
This paper introduces a new class of methods for the time integration of evolution problems
set as systems of ODEs (or PDEs after space discretization).
This class contains methods that are only linearly implicit, no matter the evolution equation.
Moreover, the paper describes a specific way to design linearly implicit methods of any arbitrarily
high order.
Using suitable definitions of consistency and stability, we prove that such methods
are actually of high order for ODEs, and the proof extends to finite systems of ODEs.
We illustrate numerically that some of these methods
are of the expected order for two examples of PDEs (nonlinear Schr\"odinger equation in 1d and 2d
and a nonlinear heat equation in 1d), and discuss some of their qualitative properties.
Our numerical results show that the linearly implicit methods introduced in this paper
behave rather poorly in terms of efficiency for simple small systems of ODEs.
In contrast, they illustrate numerically that a linearly implicit method of order $2$ outperforms
standard methods of order $2$ from the litterature for a NLS equation on a domain where no spectral
method can be applied.

Perspectives of this work include a rigorous analysis of these linearly implicit methods
in PDE contexts ({\it i.e.} before discretization in space).
In this direction, a recent result \cite{BDDLV19} proves that the relaxation method
\eqref{eq:keepcoolBesse}, which belongs to the class of methods presented here
(see remark \ref{rem:relaxBesse}), is of order $2$ when applied to the NLS equation.
Another question is that of the
possibility to design, in a systematic way,
linearly implicit methods of high order with some qualitative properties
adapted to the PDE problem ({\it e.g.} preservation of mass or energy for NLS equation,
energy decrease for parabolic problems).

\section*{Acknowledgements}
  This work was partially supported by the Labex CEMPI (ANR-11-LABX-0007-01).

\bibliographystyle{spmpsci}

\bibliography{labib}

\end{document}